\newcommand{\defi}[1]{{\upshape\sffamily #1}}
\DeclareMathOperator{\ShHom}{\mathscr{H}\text{\kern -3pt {\calligra\large om}}\,}
\renewcommand{\a}{\alpha}
\renewcommand{\b}{\beta}
\newcommand{\bw}{\bigwedge}
\renewcommand{\det}{\textrm{det}}
\renewcommand{\ll}{\lambda}
\newcommand{\onto}{\twoheadrightarrow}
\newcommand{\oo}{\otimes}
\newcommand{\pd}{\partial}
\renewcommand{\P}{\mathcal{P}}
\renewcommand{\SS}{\mathbb{S}}
\newcommand{\GL}{\operatorname{GL}}
\newcommand{\Hom}{\operatorname{Hom}}
\newcommand{\rk}{\operatorname{rank}}
\newcommand{\Sym}{\operatorname{Sym}}
\newcommand{\Tor}{\operatorname{Tor}}
\newcommand{\Graph}{\operatorname{Graph}}
\newcommand{\Rees}{\operatorname{Rees}}
\renewcommand{\det}{\operatorname{det}}
\newcommand{\gr}{\operatorname{gr}}
\renewcommand{\ker}{\operatorname{ker}}
\newcommand{\bb}[1]{\mathbb{#1}}
\newcommand{\mc}[1]{\mathcal{#1}}
\newcommand{\mf}[1]{\mathfrak{#1}}
\newcommand{\ol}[1]{\overline{#1}}
\newcommand{\op}[1]{\operatorname{#1}}
\newcommand{\ul}[1]{\underline{#1}}
\def\PP{{\textbf P}}
\def\lra{\longrightarrow}
\def\llra{\longleftrightarrow}
\newtheorem{theorem}{Theorem}[section]
\newtheorem*{theorem*}{Theorem}
\newtheorem*{problem*}{Problem}
\newtheorem{lemma}[theorem]{Lemma}
\newtheorem{proposition}[theorem]{Proposition}
\newtheorem{corollary}[theorem]{Corollary}
\newtheorem*{corollary*}{Corollary}
\newtheorem*{main-thm*}{Main Theorem}
\newtheorem*{linear-resolutions*}{Theorem on Linear Resolutions}
\newtheorem*{regularity-powers*}{Theorem on Regularity}
\newtheorem*{injectivity-Ext*}{Theorem on Injectivity of Maps of Ext Modules}
\newtheorem*{Kodaira*}{Kodaira Vanishing for Determinantal Thickenings}
\theoremstyle{definition}
\newtheorem*{definition*}{Definition}
\newtheorem{example}[theorem]{Example}
\newtheorem{question}[theorem]{Question}
\theoremstyle{remark}
\newtheorem{remark}[theorem]{Remark}
\newtheorem*{remark*}{Remark}
\numberwithin{equation}{section}
\begin{document}

\title{Relations between the $2\times 2$ minors of a generic matrix}

\author{Hang Huang}
\address{Department of Mathematics,  Texas A\&M University,  Mailstop 3368,  College Station, TX 77843, USA}
\email{hhuang@math.tamu.edu}

\author{Michael Perlman}
\address{Department of Mathematics and Statistics, Queen’s University, Kingston, ON K7L 3N6, Canada}
\email{mp174@queensu.ca}

\author{Claudia Polini}
\address{Department of Mathematics, University of Notre Dame, 255 Hurley, Notre Dame, IN 46556, USA}
\email{cpolini@nd.edu}

\author{Claudiu Raicu}
\address{Department of Mathematics, University of Notre Dame, 255 Hurley, Notre Dame, IN 46556, USA\newline
\indent Institute of Mathematics ``Simion Stoilow'' of the Romanian Academy}
\email{craicu@nd.edu}

\author{Alessio Sammartano}
\address{Dipartimento di Matematica, Politecnico di Milano, Via Bonardi 9, 20133, Milano, Italy}
\email{alessio.sammartano@polimi.it}

\subjclass[2020]{Primary: 13D07, 13A50, 13C40, 14M12; Secondary: 13A30.}

\date{\today}

\keywords{Determinantal varieties, Koszul homology, relations between minors, Grassmannians}

\begin{abstract} 
 We prove the  case $t=2$ of a conjecture of Bruns--Conca--Varbaro, describing the minimal relations between the $t\times t$ minors of a generic matrix. Interpreting these relations as polynomial functors, and applying transpose duality as in the work of Sam--Snowden, this problem is equivalent to understanding the relations satisfied by $t\times t$ generalized permanents. Our proof follows by combining Koszul homology calculations on the minors side, with  a study of subspace varieties on the permanents side, and with the Kempf--Weyman technique (on both sides).
\end{abstract}

\maketitle

\section{Introduction}\label{sec:intro}

For positive integers $m,n$, consider the rational map
\begin{equation}\label{eq:phi-wedgephi}
 \Lambda_2 : \bb{P}(\Hom(\bb{C}^m,\bb{C}^n)) \dashrightarrow \bb{P}\left(\Hom\left(\bw^2\bb{C}^m,\bw^2\bb{C}^n\right)\right),\quad \phi \lra \bw^2 \phi.
\end{equation}
We denote the (closure of its) image by $X_{m,n}$ and consider the problem of understanding the defining equations of $X_{m,n}$. Identifying $\Hom(\bb{C}^m,\bb{C}^n)$ with the space of $m\times n$ complex matrices, the map~(\ref{eq:phi-wedgephi}) simply assigns to a matrix the tuple consisting of all its $2\times 2$ minors, so finding the equations of $X_{m,n}$ amounts to understanding the algebraic relations that these minors satisfy. When $m=2$, we can identify $X_{2,n}$ with $\op{Gr}_2(\bb{C}^n)$, the \defi{Grassmannian} of $2$-dimensional subspaces of $\bb{C}^n$, whose equations are well-understood: they are all quadratic, known as the \defi{Pl\"ucker relations}.
For general $m,n$ however, the Pl\"ucker relations are not sufficient, as shown by Bruns, Conca and Varbaro in \cite{BCV}. 
In fact, \cite{BCV} identifies new minimal equations between the $t\times t$ minors for an arbitrary $t$, of degree 2 and 3.
They are described in  representation theoretic terms, and  obtained through a careful analysis of highest weight vectors and combinatorics of bi-tableaux.
It is conjectured in \cite[Conjecture~2.12]{BCV} that these quadratic and cubic equations generate all the relations between the minors.  The goal of our paper is to confirm the conjecture in the case when $t = 2$. 
This boils down to a vanishing result for $\Tor$ groups, which we prove by combining a number of techniques from representation theory and algebraic geometry.

To state the results, as well as for most proofs, it is convenient to use a coordinate independent approach, and to make the usual identification between matrices and $2$-tensors. To that end, we consider complex vector spaces $V_1,V_2$, with $\dim(V_1)=m$, $\dim(V_2)=n$. We let $S = \Sym(V_1\oo V_2)$, which we think of as the homogeneous coordinate ring of the source of  $\Lambda_2$, and we consider the natural action of the group $\GL=\GL(V_1)\times\GL(V_2)$ on $S$. If we identify $S\simeq\bb{C}[x_{i,j}]$ then the $2\times 2$ minors of the generic matrix of indeterminates $(x_{i,j})$ span a $\GL$-invariant subspace of $S$ isomorphic to $\bw^2 V_1\oo \bw^2 V_2$. We let $W=\bw^2 V_1\oo \bw^2 V_2$, and consider the polynomial ring $R = \Sym(W)$, which we think of as the homogeneous coordinate ring of the target of the map $\Lambda_2$. The inclusion of $W$ into $S$ gives rise to an algebra homomorphism $\Psi:R\to S$, whose image we denote by $A$. We have that $A=\bb{C}[X_{m,n}]$ is the homogeneous coordinate ring of $X_{m,n}$, and $I(X_{m,n}) = \ker(\Psi)$. Noting that the minimal generators of $I(X_{m,n})$ are encoded by $\Tor_1^R(A,\bb{C})$, we prove the following (here, for a partition $\ll=(\ll_1\geq\ll_2\geq\cdots)$, we write $\bb{S}_{\ll}$ for the corresponding \defi{Schur functor}, see Section~\ref{subsec:functors}).

\begin{theorem}\label{thm:relations-A}
 We have $\Tor_1^R(A,\bb{C})_j = 0$ for $j\neq 2,3$, and we have an isomorphism of $\GL$-representations
 \[ \Tor_1^R(A,\bb{C})_2 = \bb{S}_{1,1,1,1}V_1 \oo \bb{S}_{2,2}V_2 \oplus \bb{S}_{2,2}V_1 \oo \bb{S}_{1,1,1,1}V_2,\]
 \[ \Tor_1^R(A,\bb{C})_3 = \bb{S}_{3,1,1,1}V_1 \oo \bb{S}_{2,2,2}V_2 \oplus \bb{S}_{2,2,2}V_1 \oo \bb{S}_{3,1,1,1}V_2.\]
\end{theorem}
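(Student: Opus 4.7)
The plan is to interpret $\Tor_1^R(A,\bb{C})_j$ as the first homology of the Koszul complex $A \oo_{\bb{C}} \bw^{\bullet} W$ in internal degree $j$, and to analyze it by combining Schur-functor combinatorics with the geometric Kempf--Weyman method. Because all objects are $\GL$-equivariant, the computation reduces to identifying which Schur components survive in each degree, and the input is the explicit Schur decomposition of $A_k$ obtained inductively from $A_1 = W$ using multiplication in $S$. The task splits into three parts: the quadratic relations ($j=2$), the cubic relations ($j=3$), and vanishing for $j \geq 4$.

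For $j=2$, one has $\Tor_1^R(A,\bb{C})_2 = \ker \Psi_2$, where $\Psi_2 : \Sym^2 W \to \Sym^4(V_1 \oo V_2)$ is multiplication. Expanding $\Sym^2 W = \Sym^2(\bw^2 V_1 \oo \bw^2 V_2)$ by plethysm produces five $\GL$-isotypic components, while $\Sym^4(V_1 \oo V_2) = \bigoplus_{\ll \vdash 4} \bb{S}_{\ll}V_1 \oo \bb{S}_{\ll}V_2$ contains only ``diagonal'' types. By Schur's lemma the two off-diagonal summands $\bb{S}_{2,2}V_1 \oo \bb{S}_{1,1,1,1}V_2$ and $\bb{S}_{1,1,1,1}V_1 \oo \bb{S}_{2,2}V_2$ are forced into $\ker \Psi_2$, and a direct check on a generic matrix confirms that the three diagonal components $\bb{S}_{\ll}V_1 \oo \bb{S}_{\ll}V_2$ with $\ll \in \{(2,2),(2,1,1),(1,1,1,1)\}$ map injectively. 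This yields $\Tor_1^R(A,\bb{C})_2$ exactly as stated.

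For $j=3$ the cubic relations are not visible from naive Koszul manipulations on the minors side, and this is where the permanents detour enters. I would apply the Sam--Snowden transpose duality, which at the level of polynomial functors swaps $\bb{S}_{\ll} \leftrightarrow \bb{S}_{\ll^T}$, to translate the problem into the analogous statement for the algebra $A'$ of $2\times 2$ \emph{permanents}, namely the subalgebra of $\Sym(V_1 \oo V_2)$ generated by $\Sym^2 V_1 \oo \Sym^2 V_2$. On the permanents side $\Spec A'$ is (contained in) a subspace variety, which admits a natural Kempf--Weyman desingularization by a vector bundle over a product of Grassmannians $\op{Gr}_2(V_1) \times \op{Gr}_2(V_2)$; pushing down the corresponding Koszul complex and using Borel--Weil--Bott identifies the Schur content of the minimal cubic relations of $A'$. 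Transposing partitions back through the duality produces the two summands $\bb{S}_{3,1,1,1}V_1 \oo \bb{S}_{2,2,2}V_2$ and $\bb{S}_{2,2,2}V_1 \oo \bb{S}_{3,1,1,1}V_2$ for $A$. Applying the Kempf--Weyman technique directly to a desingularization of $X_{m,n}$ (on the minors side) supplies a bound on the internal degrees in which $\Tor_1^R(A,\bb{C})$ can be supported, ensuring the vanishing for $j \geq 4$.

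The main obstacle will be the degree 3 case. Realizing the permanents variety as a subspace variety with the right geometric properties (rational singularities, Cohen--Macaulayness, and a correct identification of its homogeneous coordinate ring with $A'$ in the relevant range) is what legitimizes the Kempf--Weyman pushforward in the first place. Once this is in place, controlling the Borel--Weil--Bott cancellations on the Grassmannian, so that precisely the two stated Schur summands emerge with no spurious extras, is the combinatorial heart of the argument, and verifying that the transpose duality transports the answer cleanly back to the minors side in the correct bidegree is the final technical hurdle.
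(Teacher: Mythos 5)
The weak point in your proposal is the vanishing for $j\geq 4$, which is in fact the heart of the problem and where the paper expends almost all of its effort. You claim that ``applying the Kempf--Weyman technique directly to a desingularization of $X_{m,n}$ supplies a bound on the internal degrees,'' but there is no clean Kempf--Weyman resolution of $X_{m,n}$ of the required shape: the rank of $\bw^2\phi$ jumps as $\op{rk}\phi$ varies, so the cone over $X_{m,n}$ is not the image of a subbundle of a trivial bundle over a single homogeneous space, and the geometric technique does not apply off the shelf. The same objection hits your $j=3$ step: $\Spec(\ol{A})$ is not desingularized by a bundle over $\op{Gr}_2(V_1)\times\op{Gr}_2(V_2)$. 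The subspace variety the paper actually uses lives over $\bb{P}(V_1)$ (rank $m-1$ quotients of $V_1$), and moreover it contains, rather than equals, the variety cut out by $\ol{A}$; this is exactly why the paper must separately control the discrepancy module $J$ when $m=n$.

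More fundamentally, your sketch misses the mechanism that makes the permanent detour indispensable. The module $S^{(2)}$ is infinitely generated over $R$ (minors side) but \emph{finitely} generated over $\ol{R}$ (permanents side); the whole point of transposing is that the increasing filtration $\ol{A}=\ol{M}_0\subseteq\cdots\subseteq\ol{M}_{\lfloor n/2\rfloor}=S^{(2)}$ is finite. The paper first computes $H_0$ and $H_1$ of the Koszul complex on $W\oo S$ to get a degree bound on the presentation of $S^{(2)}$, then applies Kempf--Weyman and Bott not to $X_{m,n}$ but to the filtration quotients $M_r/M_{r-1}$, realized as $H^0(\bb{P}V_1\times\bb{P}V_2, \mc{L}^{2r}\oo\Sym(\eta))$, to show $\Tor_i$ is concentrated in degree $r+i$ for $i\leq 2$. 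This gives $\Tor_1^{\ol{R}}(\ol{A},\bb{C})_j=0$ for $j>\lfloor n/2\rfloor+2$, which settles $n\leq 5$; for $n\geq 6$ one then needs the subspace-variety presentation (degree $m$ equations) together with an induction on $(m,n)$ implemented functorially by evaluating the presentation of $\ol{\mf{A}}$ on a tautological sheaf over $\bb{P}V_1$. None of these ingredients appear in your proposal, and without them the vanishing in high degree has no proof. Your $j=2$ argument is fine as a sanity check, and the paper in any case handles $j\leq 4$ by citing Bruns--Conca--Varbaro rather than reproving it.
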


For our proof of Theorem~\ref{thm:relations-A}, we will consider in parallel the closely related problem of understanding relations between permanents. More precisely, we consider the map (which is defined everywhere)
\begin{equation}\label{eq:phi-symphi}
 \Sigma_2 : \bb{P}(\Hom(\bb{C}^m,\bb{C}^n)) \lra \bb{P}\left(\Hom\left(\Sym^2\bb{C}^m,\Sym^2\bb{C}^n\right)\right),\quad \phi \lra \Sym^2 \phi,
\end{equation}
and denote its image by $\ol{X}_{m,n}$. When $m=1$, $\ol{X}_{1,n}$ can be identified with the degree two Veronese variety, whose defining equations are again known to be quadratic. As we will see, $\ol{X}_{m,n}$ also admits cubic minimal relations in general. A \defi{generalized $2\times 2$ submatrix} is one of the form
\[\begin{bmatrix} x_{i_1,j_1} & x_{i_1,j_2} \\ x_{i_2,j_1} & x_{i_2,j_2} \end{bmatrix},\]
where we do not require that $i_1\neq i_2$, or that $j_1\neq j_2$. The corresponding \defi{generalized permanent} is given by $x_{i_1,j_1} \cdot x_{i_2,j_2} + x_{i_1,j_2} \cdot x_{i_2,j_1}$. The (generalized) $2\times 2$ permanents span a $\GL$-invariant subspace inside $S$, isomorphic to $\ol{W}=\Sym^2 V_1 \oo \Sym^2 V_2$, and complementary to the space of minors within the quadrics in $S$:
\[ S_2 = \Sym^2(V_1\oo V_2) = W \oplus \ol{W}.\]
We define $\ol{R}=\Sym(\ol{W})$, and let $\ol{A}$ be the image of the natural map $\ol{\Psi}:\ol{R}\lra S$ induced by the inclusion $\ol{W}\subset S$. The ring $\ol{A}$ is the algebra generated by the $2\times 2$ permanents, and is also the homogeneous coordinate ring of the image of $\Sigma_2$. Moreover, we have that $\ker(\ol{\Psi}) = I(\ol{X}_{m,n})$. We will prove the following, which in fact turns out to be equivalent to Theorem~\ref{thm:relations-A}.

\begin{theorem}\label{thm:relations-ol-A}
 We have $\Tor_1^{\ol{R}}(\ol{A},\bb{C})_j = 0$ for $j\neq 2,3$, and we have an isomorphism of $\GL$-representations
 \[ \Tor_1^{\ol{R}}(\ol{A},\bb{C})_2 = \bb{S}_{4}V_1 \oo \bb{S}_{2,2}V_2 \oplus \bb{S}_{2,2}V_1 \oo \bb{S}_{4}V_2,\]
 \[ \Tor_1^{\ol{R}}(\ol{A},\bb{C})_3 = \bb{S}_{4,1,1}V_1 \oo \bb{S}_{3,3}V_2 \oplus \bb{S}_{3,3}V_1 \oo \bb{S}_{4,1,1}V_2.\]
\end{theorem}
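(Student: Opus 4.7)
The plan is to identify the $\GL$-isotypic structure of $\Tor_1^{\ol{R}}(\ol{A},\bb{C})$ by combining direct representation-theoretic computations in small degrees, the Kempf-Weyman technique applied to an enveloping subspace variety, and Sam-Snowden transpose duality linking this statement to Theorem \ref{thm:relations-A}.

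\emph{Quadratic relations.} First, I decompose $\Sym^2 \ol{W} = \Sym^2(\Sym^2 V_1 \oo \Sym^2 V_2)$ using $\Sym^2(A \oo B) = \Sym^2 A \oo \Sym^2 B \oplus \bigwedge^2 A \oo \bigwedge^2 B$, together with $\Sym^2(\Sym^2 V) = \bb{S}_4 V \oplus \bb{S}_{2,2}V$ and $\bigwedge^2(\Sym^2 V) = \bb{S}_{3,1}V$. Comparing against the Cauchy decomposition $\Sym^4(V_1 \oo V_2) = \bigoplus_{|\ll|=4} \bb{S}_\ll V_1 \oo \bb{S}_\ll V_2$, and using that any $\GL$-equivariant map decomposes isotypically, one reads off that the kernel of $\Sym^2 \ol{W} \to \Sym^4(V_1 \oo V_2)$ equals $\bb{S}_4 V_1 \oo \bb{S}_{2,2}V_2 \oplus \bb{S}_{2,2}V_1 \oo \bb{S}_4 V_2$. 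Since $\ol{R}$ has no relations in degree $1$, this is $\Tor_1^{\ol{R}}(\ol{A},\bb{C})_2$, yielding the degree-$2$ claim.

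\emph{Cubic relations via subspace varieties.} Assume without loss of generality $m\leq n$. Since $\Sym^2 \phi$ factors through $\Sym^2(\op{im}\phi)$, we have $\ol{X}_{m,n} \subseteq \op{Sub}_m$, where $\op{Sub}_m \subset \Sym^2 V_1 \oo \Sym^2 V_2$ is the subspace variety of tensors lying in $\Sym^2 V_1 \oo \Sym^2 U$ for some $m$-dimensional $U \subset V_2$. The Kempf-Weyman desingularization is the total space of the bundle $\Sym^2 V_1 \oo \Sym^2 \mathcal{R}$ over $\op{Gr}(m,V_2)$, with $\mathcal{R}$ the tautological rank-$m$ subbundle, and Bott's theorem produces a minimal $\GL$-equivariant free resolution of $\bb{C}[\op{Sub}_m]$ over $\ol{R}$ explicit in Schur functors. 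An exact sequence relating the coordinate rings of $\ol{X}_{m,n}$ and $\op{Sub}_m$ then yields a list of candidate $\GL$-components for $\Tor_1^{\ol{R}}(\ol{A},\bb{C})_3$; one verifies that $\bb{S}_{4,1,1}V_1 \oo \bb{S}_{3,3}V_2$ and $\bb{S}_{3,3}V_1 \oo \bb{S}_{4,1,1}V_2$ do lie in $\ker(\ol{\Psi})$ (for instance by evaluating on a highest-weight vector) and are not expressible as $\ol{R}_+$-multiples of the quadratic relations from the previous step.

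\emph{Vanishing in higher degrees, and the main obstacle.} The core difficulty is ruling out minimal generators in degrees $j\geq 4$. For this I would invoke the Sam-Snowden transpose duality: Schur functor conjugation $\bb{S}_\ll \leftrightarrow \bb{S}_{\ll^\top}$ exchanges $\Sym^k$ with $\bigwedge^k$ and identifies the bigraded $\GL$-Tor of the permanents setup with that of the minors setup from Theorem \ref{thm:relations-A}. Checking the partitions $(1,1,1,1)^\top = (4)$, $(2,2)^\top = (2,2)$, $(3,1,1,1)^\top = (4,1,1)$, and $(2,2,2)^\top = (3,3)$ confirms that the stated permanents answer is exactly the transpose dual of the minors answer, so in particular $\Tor_1^{\ol{R}}(\ol{A},\bb{C})_j = 0$ for $j\neq 2,3$ follows from the analogous vanishing on the minors side. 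The hardest technical step is carrying out the Kempf-Weyman computation on $\op{Sub}_m$ with enough precision to separate the irreducibles genuinely contributing to $\Tor_1^{\ol{R}}(\ol{A},\bb{C})$ from those absorbed into the ring structure of $\ol{A}$ itself; this is what ultimately pins down the cubic multiplicities and, together with transpose duality, forces the stated vanishing beyond degree $3$.
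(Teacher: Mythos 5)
There is a genuine gap at the crux of the proof — the vanishing $\Tor_1^{\ol{R}}(\ol{A},\bb{C})_j=0$ for $j\geq 4$ — and it stems from a circularity in your use of transpose duality. You invoke Sam--Snowden duality to deduce the high-degree vanishing for permanents "from the analogous vanishing on the minors side" (Theorem~\ref{thm:relations-A}). But Theorems~\ref{thm:relations-A} and~\ref{thm:relations-ol-A} are the same new result stated in two equivalent languages; neither is previously known. The paper indeed establishes this equivalence via $\tau_1\tau_2$ in Section~\ref{subsec:bi-functors}, but then actually \emph{proves} the vanishing on the permanents side (in Sections~\ref{sec:koszul}--\ref{sec:induction}) and only afterward deduces Theorem~\ref{thm:relations-A}. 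You have not supplied an independent argument for either side, so the transpose-duality step in your third paragraph proves nothing.

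Even setting that aside, your subspace-variety sketch cannot on its own rule out relations in degrees $4\leq j< n$. The paper's strategy needs \emph{two} complementary degree bounds: (i) the Koszul-homology computation of $H_1(\mc{K}_\bullet(W))$ for minors, transported to permanents, together with the finite filtration $\ol{A}=\ol{M}_0\subseteq\cdots\subseteq\ol{M}_{\lfloor n/2\rfloor}=S^{(2)}$ whose successive quotients are resolved via Kempf--Weyman on $\bb{P}V_1\times\bb{P}V_2$, yielding $\Tor_1^{\ol{R}}(\ol{A},\bb{C})_j=0$ for $j>\lfloor n/2\rfloor+2$ (Corollary~\ref{cor:Tor1-low-degree}); and (ii) the subspace-variety-plus-induction argument showing that for $n\geq 6$, any new relation in degree $\geq 4$ must live in degree $\geq n$. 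The two bounds are incompatible for $n\geq 6$, which closes the argument. Your proposal only gestures at ingredient (ii) (and for the cubic step, not the high-degree vanishing), with no analogue of the $S^{(2)}$ filtration giving bound (i). Without (i), the inductive subspace-variety argument pushes the possible relation degree up to $\geq n$ but cannot eliminate it. Finally, your approach to the cubic relations — comparing $\ol{A}$ with the subspace-variety coordinate ring and checking highest-weight vectors — is plausible in spirit but undeveloped: the paper instead cites the explicit low-degree computations of Bruns--Conca--Varbaro (Sections~2.1, 3.3, 3.4 of~\cite{BCV}) for $j\leq 4$, and you would need to reproduce comparable explicit work to pin down the multiplicities.
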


The relationship between Theorems~\ref{thm:relations-A} and~\ref{thm:relations-ol-A} comes from interpreting all the constructions described so far as polynomial functors, and using transpose duality as explained in Section~\ref{subsec:bi-functors}. For now, we note that it can be visualized by drawing the Young diagrams of the partitions associated with the relevant Schur functors. For instance, we have that the exterior and symmetric powers correspond to transposed diagrams:
\[\Yvcentermath1 \bw^2 \llra \yng(1,1) \qquad\mbox{ and }\qquad\Sym^2 \llra \yng(2)\]
More generally, the same is true if we compare the $\Tor$ groups for minors and permanents:
\[
\setlength{\extrarowheight}{3pt}
\ytableausetup{smalltableaux,aligntableaux=center}
\renewcommand{\arraystretch}{2}
\begin{array}{c|c|c}
 & \Tor_1^R(A,\bb{C}) & \Tor_1^{\ol{R}}(\ol{A},\bb{C}) \\
 \hline 
2 & \ydiagram{1,1,1,1} \oo \ydiagram{2,2} + \ydiagram{2,2} \oo \ydiagram{1,1,1,1} & \ydiagram{4} \oo \ydiagram{2,2} + \ydiagram{2,2} \oo \ydiagram{4} \\[1em]
 \hline 
3 & \ydiagram{3,1,1,1} \oo \ydiagram{2,2,2} + \ydiagram{2,2,2} \oo \ydiagram{3,1,1,1} & \ydiagram{4,1,1} \oo \ydiagram{3,3} + \ydiagram{3,3} \oo \ydiagram{4,1,1} \\[1em]
\end{array}
\]

The rings $A$ and $\ol{A}$ are known as the \defi{special fiber rings} associated with the maps (\ref{eq:phi-wedgephi}) and (\ref{eq:phi-symphi}), and they are natural quotients of the Rees algebras of the ideals $I_2=\langle W\rangle$ of $2\times 2$ minors, and $\ol{I}_2=\langle \ol{W}\rangle$ of $2\times 2$ permanents. The Rees algebras give the bi-graded homogeneous coordinate rings of the graphs of the maps (\ref{eq:phi-wedgephi}) and (\ref{eq:phi-symphi}), and it is an open problem to compute their presentation. We explain a reduction procedure in Section~\ref{sec:Rees}, from which we derive the presentation of the Rees algebra of $I_2$ in the case of $m\times 3$ matrices.

We end the introduction with a summary of the proof strategy for Theorem~\ref{thm:relations-ol-A}.

\medskip

\noindent{\bf Step 1.} Using the results of \cite{BCV} and the equivalence between minors and permanents, we obtain the description of $\Tor_1^{\ol{R}}(\ol{A},\bb{C})_j$ for $j\leq 4$. This step is based on a general duality for polynomial functors, which is explained in Section~\ref{subsec:bi-functors}. The main part of the argument is then concerned with proving the vanishing $\Tor_1^{\ol{R}}(\ol{A},\bb{C})_j = 0$ for $j>4$, and is covered by the following steps.

\medskip

\noindent{\bf Step 2.} We assume that $m\geq n$. We let $S^{(2)}$ denote the second Veronese subring of $S$, which is a finitely generated $\ol{R}$-module (it is infinitely generated over $R$). We compute the zeroth and first Koszul homology of $S$ relative to $\ol{W}$ and restrict to even degrees to find a presentation of $S^{(2)}$ over $\ol{R}$. This presentation has the property that the generators are in degree up to $\lfloor n/2\rfloor$, and the relations in degree up to $\lfloor n/2\rfloor+1$, which implies that
\begin{equation}\label{eq:vanishing-Tor1-S2}
 \Tor_1^{\ol{R}}(S^{(2)},\bb{C})_j = 0 \mbox{ for }j>\lfloor n/2\rfloor+1.
\end{equation}
We note that the calculation of Koszul homology is performed on the minors side (relative to $W$), and then carried over by functoriality to the permanents side (this is a recurring theme in our argument). Since the Koszul homology modules relative to $W$ are equivariant modules supported on the cone over a Segre variety, their structure can be understood using a bivariate version of the Sam--Snowden theory of $\GL$-equivariant modules over polynomial rings in infinitely many variables. We explain the necessary aspects of the theory in Sections~\ref{subsec:filtrations} and~\ref{subsec:filtrations-bigraded}, and perform the Koszul homology calculation in Section~\ref{sec:koszul}.

\medskip

\noindent{\bf Step 3.} The details for this step are discussed in Section~\ref{sec:filtrations-Vero}. We show the existence of a finite filtration
\[ \ol{A} = \ol{M}_0 \subseteq  \ol{M}_1 \subseteq  \ol{M}_2 \subseteq \cdots \subseteq \ol{M}_{\lfloor n/2\rfloor} = S^{(2)}\]
by $\ol{R}$-modules, such that $\ol{M}_r/\ol{M}_{r-1}$ is generated in degree $r$, has degree $(r+1)$ first syzygies, and degree $(r+2)$ second syzygies. As in {\bf Step 2}, we obtain this by translating the description of the beginning of the minimal resolution of $M_r/M_{r-1}$, where 
\[A=M_0\subseteq M_1 \subseteq \cdots \subseteq S^{(2)}\]
is a corresponding (infinite!) filtration by $R$-modules. The shape of the resolution of $M_r/M_{r-1}$ is described using the Kempf--Weyman geometric technique and Bott's theorem. 

It turns out that $\ol{A}$ and $S^{(2)}$ agree in high degree, so $S^{(2)}$ gives a good approximation of $\ol{A}$ for which we can estimate $\Tor_1$ using \eqref{eq:vanishing-Tor1-S2}. We can then think of $\ol{A}$ as being obtained from $S^{(2)}$ by removing finitely many layers, while controlling how $\Tor_1$ changes at each step. We conclude that
\[ \Tor_1^{\ol{R}}(\ol{A},\bb{C})_j = 0 \mbox{ for }j>\lfloor n/2\rfloor+2,\]
and in particular if $n\leq 5$ then the vanishing sought in {\bf Step 1} holds.

\medskip

\noindent{\bf Step 4.} We assume from now on that $n\geq 6$, and prove by induction on the pair $(m,n)$ that Theorem~\ref{thm:relations-ol-A} holds. A useful tool in the study of spaces of tensors, particularly well-suited for inductive proofs, is the subspace variety. After analyzing the equations of a subspace variety $Y$ on the permanents side in Section~\ref{sec:subspace}, we use induction to conclude that for $j\geq 4$, the only non-zero groups $\Tor_1^{\ol{R}}(\ol{A},\bb{C})_j$ may occur when $j\geq n$. Since $n>\lfloor n/2\rfloor+2$, all such groups vanish by {\bf Step 3}.

\medskip

To motivate some of our choices for working on either the minors or permanents side, we note that the passage between the two settings is purely a representation theoretic construction, which behaves poorly relative to the geometry: $\ol{W}$ defines a base-point free linear series, while $W$ has a large base locus, given by a Segre variety; $S^{(2)}$ is the normalization of $\ol{A}$, but it is an infinite $A$-module; the Koszul homology relative to $\ol{W}$ has finite length, while the one with respect to $W$ is built from geometrically interesting modules supported on a Segre cone; the modules $\ol{M}_r/\ol{M}_{r-1}$ have finite length, while each $M_r/M_{r-1}$ arises as a push-forward from a geometric vector bundle. Most of the non-trivial calculations that we make occur on the most geometrically significant side, and are then translated via representation theory to the other side. One exception is the study of the subspace variety, which is equally significant on both sides. The choice we made there was based on the fact that the subspace variety on the permanents side has minimal equations of degree twice as large as that on the minors side, which is crucial in the inductive argument from {\bf Step~4}.

\medskip

\noindent{\bf Organization.} In Section~\ref{sec:prelim} we recall some basic facts about polynomial functors, and give the functorial interpretation of Theorems~\ref{thm:relations-A} and~\ref{thm:relations-ol-A}, explaining how they are equivalent. In Section~\ref{sec:koszul} we compute the first Koszul homology group of the polynomial ring $S$ with respect to the space of $2\times 2$ minors, and derive the corresponding result for permanents. In Section~\ref{sec:filtrations-Vero} we explain a filtration argument that gives an upper bound for the degrees of the minimal generators of $I(\ol{X}_{m,n})$. In Section~\ref{sec:subspace} we find the equations of the subspace variety $Y$. The inductive step in the proof of Theorem~\ref{thm:relations-ol-A} is explained in Section~\ref{sec:induction}. We conclude with a discussion of the defining ideal of the Rees algebra in Section~\ref{sec:Rees}.

\section{Preliminaries}\label{sec:prelim}

The goal of this section is to establish some basic notation concerning partitions and representations of the general linear group, as well as to discuss polynomial functors in the uni- and bivariate setting. We recall the transpose duality for polynomial functors following \cite{sam-snowden}, and explain how the functorial approach gives an equivalence between Theorems~\ref{thm:relations-A} and~\ref{thm:relations-ol-A}.

\subsection{Partitions}\label{subsec:partitions}

We write $\mc{P}$ for the set of all partitions $\ll=(\ll_1\geq\ll_2\geq\cdots\geq 0)$, and write $\mc{P}_n$ for the subset consisting of those $\ll\in\P$ that have at most $n$ parts (that is, $\ll_{n+1}=0$). We write $|\ll|=\ll_1+\ll_2+\cdots$ for the \defi{size} of $\ll$, and write $\ll'$ for the \defi{conjugate partition}, obtained by transposing the corresponding Young diagram. For partitions with repeating parts, we use the abbreviation $(b^a)$ for the sequence $(b,b,\cdots,b)$ of length $a$; for instance $(3,3,3,3,1,1)$ may be written as $(3^4,1^2)$. For $\ll,\mu\in\P$, we write $\mu\geq\ll$ if $\mu_i\geq\ll_i$ for all $i$. We say that $\mu/\ll$ is a \defi{horizontal strip} if $\mu_i\geq\ll_i\geq\mu_{i+1}$ for all $i$, and write $\mu/\ll\in\op{HS}$.

\subsection{Polynomial functors}\label{subsec:functors}

We write $\op{Vec}$ for the category of complex vector spaces. For $d\geq 0$ we consider the tensor power functors $\bb{T}^d:\op{Vec}\lra\op{Vec}$, defined by $\bb{T}^d(V) = V^{\oo d}$. A \defi{polynomial functor} $P:\op{Vec}\lra\op{Vec}$ is a subquotient of a direct sum of tensor power functors. Polynomial functors form a semi-simple abelian category $\mc{V}$, and we refer the reader to \cite[Section~6]{sam-snowden} for its properties. The simple objects are indexed by partitions $\ll$, they are denoted $\bb{S}_{\ll}$, and called \defi{Schur functors}. When $\ll=(d)$, $\SS_{\ll}=\Sym^d$ is the symmetric power functor, while for $\ll=(1^k)$, $\SS_{\ll}=\bw^k$ is the exterior power functor. There is an exact involution $\tau:\mc{V}\lra\mc{V}$, with the property that
\[ \tau(\SS_{\ll}) = \SS_{\ll'}\mbox{ for every partition }\ll\in\P.\]
In particular, $\tau$ interchanges $\Sym^d$ and $\bw^d$. We will be interested in the subcategory $\mc{V}_{gf}$ of \defi{graded-finite} polynomial functors, which are those that decompose as direct sums of Schur functors
\begin{equation}\label{eq:P-in-Vgf}
 P = \bigoplus_{\ll\in\mc{P}} \bb{S}_{\ll}^{\oplus m_{\ll}},\quad m_{\ll}\in\bb{Z}_{\geq 0}.
\end{equation}
Note that $\tau$ preserves $\mc{V}_{gf}$. We write $P_{\ll} = \bb{S}_{\ll}^{\oplus m_{\ll}}$ and refer to it as the \defi{$\ll$-isotypic component} of $P$, and write $P_d$ for the degree $d$ part of $P$, namely
\[ P_d = \bigoplus_{|\ll|=d} \bb{S}_{\ll}^{\oplus m_{\ll}}.\]
A natural pair of elements in $\mc{V}_{gf}$, which are interchanged by $\tau$, is:
\[ \Sym = \bigoplus_{d\geq 0} \Sym^d\quad\mbox{ and }\quad \bw = \bigoplus_{d\geq 0} \bw^d.\]

We let $\GL(V)\simeq\GL_n(\bb{C})$ denote the group of invertible linear transformations of a vector space $V$ of dimension~$n$. For every $P\in\mc{V}$ we have that $P(V)$ is a $\GL(V)$-representation. For Schur functors, we have that $\SS_{\ll}V=0$ when $\ll_1'>n$, that is, when $\ll$ has more than $n$ parts. If $\ll\in\P_n$ then $\SS_{\ll}V$ is an irreducible $\GL$-representation, and moreover, we have that for $\ll,\mu\in\P_n$ there exists an isomorphism $\SS_{\ll}V \simeq \SS_{\mu}V$ as $\GL(V)$-representations if and only if $\ll=\mu$. It follows that for a fixed $\ll\in\P$ we can detect the multiplicity $m_{\ll}$ in (\ref{eq:P-in-Vgf}) by decomposing $P(V)$ into a direct sum of irreducible $\GL(V)$-representations, for any vector space $V$ with $\dim(V)\geq\ll_1'$. This fact will be used repeatedly throughout this article.

\subsection{Bi-variate polynomial functors}\label{subsec:bi-functors}

The category of \defi{bivariate polynomial functors} $P:\op{Vec}\times\op{Vec} \lra \op{Vec}$ is $\mc{V}^{\oo 2}$, with simple objects indexed by pairs $(\ll,\mu)\in\P\times\P$ and denoted $\SS_{\ll}\boxtimes\SS_{\mu}$: we have
\[(\SS_{\ll}\boxtimes\SS_{\mu})(V_1,V_2) = \SS_{\ll}V_1\otimes\SS_{\mu}V_2\mbox{ for every }V_1,V_2\in\op{Vec}.\]
We use the notation $\boxtimes$ to contrast with the univariate functor given by $(\SS_{\ll}\oo\SS_{\mu})(V) = \SS_{\ll}V\oo\SS_{\mu}V$. The subcategory $\mc{V}^{\oo 2}_{gf}$ consists of objects
\[ P = \bigoplus_{\ll,\mu\in\mc{P}} \left(\SS_{\ll}\boxtimes\SS_{\mu}\right)^{\oplus m_{\ll,\mu}},\quad m_{\ll,\mu}\in\bb{Z}_{\geq 0},\]
and we define the $(\ll,\mu)$-isotypic component $P_{\ll,\mu}$ and the bi-graded component $P_{d,e}$ in analogy with the univariate case. The involution $\tau$ induces (commuting) involutions $\tau_1,\tau_2$ on $\mc{V}^{\oo 2}$, which act on the simples by
\[ \tau_1(\SS_{\ll}\boxtimes\SS_{\mu}) = \SS_{\ll'}\boxtimes\SS_{\mu} \qquad \tau_2(\SS_{\ll}\boxtimes\SS_{\mu}) = \SS_{\ll}\boxtimes\SS_{\mu'}.\]
One of the key players in this work is the algebra functor $\mf{S}$ defined by letting 
\[\mf{S}(V_1,V_2) = \Sym(V_1\oo V_2).\]
When $V_1\simeq\bb{C}^m$, $V_2\simeq\bb{C}^n$, we have that $\mf{S}(V_1,V_2)=S$ is the coordinate ring of the space of $m\times n$ matrices. By Cauchy's formula, we have that
\begin{equation}\label{eq:decomp-mfS}
 \mf{S} = \bigoplus_{\ll\in\P} \SS_{\ll} \boxtimes \SS_{\ll},
\end{equation}
and in particular
\[ \tau_1\tau_2\mf{S} = \mf{S}.\]
We define the functor $\mf{E}=\tau_1\mf{S}=\tau_2\mf{S}$, which can be shown to send a pair $(V_1,V_2)$ to the exterior algebra $\bw(V_1\oo V_2)$ (see \cite[Corollary 2.3.3]{weyman}), and which satisfies $\tau_1\tau_2\mf{E} = \mf{E}$. We also consider the (bivariate) functors $\mf{W}$, $\ol{\mf{W}}$, $\mf{R}$, $\ol{\mf{R}}$, defined by
\[\mf{W}(V_1,V_2) = \bw^2 V_1 \oo \bw^2 V_2,\quad \ol{\mf{W}}(V_1,V_2) = \Sym^2 V_1 \oo \Sym^2 V_2,\quad\mf{R} = \Sym \circ\,\mf{W},\quad\ol{\mf{R}} = \Sym \circ\,\ol{\mf{W}}.\]
Since $\mf{W},\ol{\mf{W}}$ are subfunctors of $\mf{S}$, it follows that $\mf{S}$ is naturally an $\mf{R}$- and $\ol{\mf{R}}$-algebra functor. The natural transformations $\phi:\mf{R}\lra\mf{S}$ (resp. $\ol{\phi}:\ol{\mf{R}}\lra\mf{S}$) give rise to subfunctors $\mf{A} = \op{Im}(\phi)$ (resp. $\ol{\mf{A}} = \op{Im}(\ol{\phi})$). With the notation in the introduction, if $S=\mf{S}(V_1,V_2)$ then
\[\mf{R}(V_1,V_2)=R,\quad\ol{\mf{R}}(V_1,V_2)=\ol{R},\quad\mf{A}(V_1,V_2)=A,\quad\ol{\mf{A}}(V_1,V_2)=\ol{A}.\]
Moreover, using the transpose duality functors we get that
\[ \tau_1\tau_2\mf{W} = \ol{\mf{W}},\quad \tau_1\tau_2\mf{R} = \ol{\mf{R}},\quad \tau_1\tau_2\mf{A} = \ol{\mf{A}}.\]
This makes precise the statement that we can exchange minors for permanents in a functorial way.

We write $\mc{P}_{even}$ for the subset of $\mc{P}$ consisting of partitions of even size, and consider the set of partitions 
\[\mc{M}_0 = \{\ll\in\mc{P}_{even}:\ll_1\leq \ll_2+\ll_3+\cdots\}.\]
It follows from \cite[Section~6]{DCEP} (see also \cite[(1.2)]{BCV}) that
\begin{equation}\label{eq:GL-dec-A}
 \mf{A} = \bigoplus_{\ll\in\mc{M}_0}\bb{S}_{\ll}\boxtimes \bb{S}_{\ll},
\end{equation}
and the corresponding formula for $\ol{\mf{A}}$ is obtained by applying $\tau_1\tau_2$.

We define $\Tor_i^{\mf{R}}(\mf{A},\bb{C})$ to be the $i$-th homology of the Koszul complex
\[ \cdots \lra \bw^{i+1}\mf{W} \oo \mf{A} \lra \bw^{i}\mf{W} \oo \mf{A} \lra \bw^{i-1}\mf{W} \oo \mf{A} \lra \cdots\]
and define $\Tor_i^{\ol{\mf{R}}}(\ol{\mf{A}},\bb{C})$ analogously. Since $\tau_1$ and $\tau_2$ are exact, we have that
\begin{equation}\label{eq:tau12-Tor}
 \tau_1\tau_2\Tor_i^{\mf{R}}(\mf{A},\bb{C}) = \Tor_i^{\ol{\mf{R}}}(\ol{\mf{A}},\bb{C})\mbox{ for all }i.
\end{equation}

We note that the functors $\mf{R},\mf{A},\Tor_i^{\mf{R}}(\mf{A},\bb{C})$ etc. are all zero in bi-degree $(d,e)$ unless $d=e=2j$ is even. We will make an abuse of notation and write $\mf{F}_j$ instead of $\mf{F}_{2j,2j}$ when $\mf{F}$ is any of these functors. The discussion above shows that Theorem~\ref{thm:relations-A} is equivalent to the assertion that $\Tor_1^{\mf{R}}(\mf{A},\bb{C})_j=0$ for $j\neq 2,3$ and 
\begin{equation}\label{eq:Tor1-A}
\Tor_1^{\mf{R}}(\mf{A},\bb{C})_2 = \bb{S}_{1,1,1,1}\boxtimes\bb{S}_{2,2} \oplus \bb{S}_{2,2}\boxtimes\bb{S}_{1,1,1,1},\quad\Tor_1^{\mf{R}}(\mf{A},\bb{C})_3 = \bb{S}_{3,1,1,1}\boxtimes\bb{S}_{2,2,2} \oplus \bb{S}_{2,2,2}\boxtimes\bb{S}_{3,1,1,1}.
\end{equation}
Moreover, Theorem~\ref{thm:relations-ol-A} is equivalent to the fact that $\Tor_1^{\ol{\mf{R}}}(\ol{\mf{A}},\bb{C})_j=0$ for $j\neq 2,3$ and 
\begin{equation}\label{eq:Tor1-ol-A}
\Tor_1^{\ol{\mf{R}}}(\ol{\mf{A}},\bb{C})_2 = \bb{S}_{4}\boxtimes\bb{S}_{2,2} \oplus \bb{S}_{2,2}\boxtimes\bb{S}_{4},\quad\Tor_1^{\ol{\mf{R}}}(\ol{\mf{A}},\bb{C})_3 = \bb{S}_{4,1,1}\boxtimes\bb{S}_{3,3} \oplus \bb{S}_{3,3}\boxtimes\bb{S}_{4,1,1}.
\end{equation}
Moreover, we have that (\ref{eq:Tor1-A}) and (\ref{eq:Tor1-ol-A}) are equivalent by (\ref{eq:tau12-Tor}), showing that Theorems~\ref{thm:relations-A} and~\ref{thm:relations-ol-A} are equivalent as well.

\subsection{Highest weight vectors in $S$}\label{subsec:highest-weight}

We let $\GL=\GL(V_1)\times\GL(V_2)$ and decompose $S=\Sym(V_1\oo V_2)$ as a $\GL$-representation (see (\ref{eq:decomp-mfS}))
\begin{equation}\label{eq:decomp-S}
 S = \bigoplus_{\ll\in\mc{P}}\bb{S}_{\ll}V_1 \oo \bb{S}_{\ll}V_2.
\end{equation}
Whenever we need to work with explicit elements of $S$, we assume that $V_1,V_2$ are equipped with fixed bases yielding identifications $V_1\simeq\bb{C}^m$, $V_2\simeq\bb{C}^n$, and $S\simeq\bb{C}[x_{i,j}]$ is a polynomial ring in the entries of a generic $m\times n$ matrix. For every $r\geq 1$ we write
\[\det_r = \det(x_{i,j})_{1\leq i,j\leq r}\]
for the principal $r\times r$ minor coming from the upper left corner of the generic matrix, with the convention that $\det_r=0$ when $r>\min(m,n)$. For a partition $\ll\in\mc{P}$ we let
\[\det_{\ll} = \prod_{i=1}^{\ll_1} \det_{\ll_i'},\]
which is a highest weight vector for the action of $\GL$ on $S$. The component $\bb{S}_{\ll}V_1 \oo \bb{S}_{\ll}V_2$ in (\ref{eq:decomp-S}) is then the $\bb{C}$-linear span of the orbit $\GL\cdot\det_{\ll}$. For instance, when $V_1=V_2=\bb{C}^2$, we have that $\ol{W}=\Sym^2V_1\oo\Sym^2V_2$ is $9$-dimensional, spanned by
\[x_{1,1}^2,\ x_{1,2}^2,\ x_{2,1}^2,\ x_{2,2}^2,\ x_{1,1}x_{1,2},\ x_{1,1}x_{2,1},\ x_{1,2}x_{2,2},\ x_{2,1}x_{2,2},\ x_{1,1}x_{2,2}+x_{1,2}x_{2,1}.\]

\subsection{Filtrations on equivariant modules}\label{subsec:filtrations}

For a vector space $V$ we let $\Sym(V)$ be the corresponding polynomial ring. Throughout this section, by \defi{module} we mean a $\GL(V)$-equivariant $\Sym(V)$-module. The category of such modules is well-understood by \cite{sam-snowden-category}, and we recall here some of the basic facts that will be used later on. By varying $V$, the modules we study give rise to polynomial functors, and we encourage the reader to translate the results here in the language of the earlier sections. For  $\ll \in \P$ define the free module 
\[
F_\lambda(V) = \bb{S}_\ll V \otimes \Sym(V).
\]
Using Pieri's rule \cite[Corollary~2.3.5]{weyman}, we get a multiplicity-free $\GL(V)$-decomposition
\[ F_{\ll}(V) = \bigoplus_{\mu/\ll\in\op{HS}} \bb{S}_{\mu}V.\]
Moreover, it follows from \cite[Proposition~1.3.3]{sam-snowden-category} that if we write $\langle \mc{S}\rangle$ for the submodule generated by a subset $\mc{S}$ of $F_{\ll}(V)$, then
\begin{equation}\label{eq:sub-gen-Smu}
 \langle \bb{S}_{\mu}V\rangle = \bigoplus_{\delta\geq\mu,\delta/\ll\in\op{HS}} \bb{S}_{\delta}V.
\end{equation}
We define $M_\ll(V)$ as the quotient
\[ M_{\ll}(V) = \frac{F_\ll(V)}{\langle \bb{S}_{\mu}V : \mu/\ll\in\op{HS},\ \mu_i>\ll_i\mbox{ for some }i>1\rangle} \overset{(\ref{eq:sub-gen-Smu})}{=} \bigoplus_{d \geq 0} \bb{S}_{\ll_1+d, \ll_2, \ll_3, \ldots}V.\]
We remark that the only submodules of $M_\ll(V)$ have the form $M_{\ll_1 +d, \ll_2, \ll_3, \ldots}(V)$, and they form a chain. Furthermore, there exists a non-zero module map $M_\ll(V) \rightarrow M_\mu(V)$ if and only if $\ll_1 \geq \mu_1$ and $\ll_i = \mu_i$ for all  $i \geq 2$; in this case, the map is injective and unique up to scalar.

We equip $F_\ll(V)$ with a decreasing filtration $\{ \mc{F}^t(F_\ll(V))\}_{t\geq 0}$ by submodules, setting
\begin{equation}\label{eq:filtration-Flam}
\mc{F}^t(F_\ll(V))= \bigoplus_{\substack{\mu/\lambda\in\op{HS} \\ t\leq \mu_2 + \mu_3 + \cdots }} \bb{S}_\mu V.
\end{equation}
We denote the graded components of associated graded modules by $\gr^t(-)$.
Note that for $\ll=(0,0,\ldots)$ we have $F_{\ll}(V)=\Sym(V)$, and $\mc{F}^0(\Sym(V)) = \Sym(V) $, $\mc{F}^1(\Sym(V)) = 0$, so $\gr(\Sym(V)) =\gr^0(\Sym(V)) = \Sym(V)$. 
It follows that, for any $\ll$, the associated graded module $\gr(F_\ll(V))$ is a module over $\Sym(V)$ (and $\GL(V)$-equivariant), and moreover, we have a module isomorphism
\begin{equation}\label{eq:grt-Flam}
\gr^t(F_\ll (V)) = \frac{\mc{F}^t(F_\ll(V))}{\mc{F}^{t+1}(F_\ll(V))} = \bigoplus_{\substack{\mu/\ll \in \op{HS} \\ \mu_1 = \ll_1 \\ t = \mu_2 + \mu_3 +\cdots }} M_\mu(V).
\end{equation}
We extend this filtration to  direct sums of $F_\lambda(V)$'s, and note that by (\ref{eq:filtration-Flam}) every $\GL(V)$-equivariant map automatically respects the filtration. We note also that taking $\gr(\cdot)$ only affects the $\Sym(V)$-module structure, but not the $\GL$-structure.
In other words, if $\varphi: F_2 \rightarrow F_1$ is an equivariant map of  finite free modules, then for the associated graded map
$\gr(\varphi) : \gr(F_2) \rightarrow \gr(F_1)$
we have  $\varphi = \gr(\varphi)$ as $\GL(V)$-linear maps (but not as $\Sym(V)$-linear maps). In the special case of the (unique up to scaling) map $\varphi:F_{\ll}(V) \lra F_{\gamma}(V)$, where $\ll/\gamma\in\op{HS}$, we have that the induced map $\gr(\varphi)$ embeds
\[ M_{\mu}(V) \hookrightarrow M_{\delta}(V)\]
if there exists $\delta$ with $\delta/\gamma\in\op{HS}$, $\delta_1=\gamma_1$ and $\delta_i=\mu_i$ for $i\geq 2$, and it sends $M_{\mu}(V)$ to zero if no such $\delta$ exists.

\begin{example}\label{ex:gr-map}
 If $\ll=(3,1)$ and $\gamma=(1,1)$ then the non-zero components of $\gr(F_{\ll}(V))$ and $\gr(F_{\gamma}(V))$ are:
\[
\setlength{\extrarowheight}{3pt}
\ytableausetup{smalltableaux,aligntableaux=center}
\renewcommand{\arraystretch}{1.5}
\begin{array}{c|c|c|c|c}
 t & 1 & 2 & 3 & 4 \\
 \hline 
\gr^t(F_{\ll}(V)) & M_{\ydiagram{3,1}} & M_{\ydiagram{3,2}} \oplus M_{\ydiagram{3,1,1}} & M_{\ydiagram{3,3}} \oplus M_{\ydiagram{3,2,1}} & M_{\ydiagram{3,3,1}} \\[2em]
 \hline 
\gr^t(F_{\gamma}(V)) & M_{\ydiagram{1,1}} & M_{\ydiagram{1,1,1}}  & & \\
\end{array}
\]
Let $\varphi:F_{\ll}(V) \lra F_{\gamma}(V)$ be as above.
The only non-zero part of $\gr(\varphi)$ is given by the inclusion of $M_{3,1}$ into $M_{1,1}$, and that of $M_{3,1,1}$ into $M_{1,1,1}$.
\end{example}

\subsection{Filtrations in the bi-graded setting}\label{subsec:filtrations-bigraded}

We consider now a pair of vector spaces $V_1,V_2$, and the associated polynomial ring $\Sym(V_1) \otimes \Sym(V_2) = \Sym(V_1\oplus V_2)$. We write $\GL=\GL(V_1)\times\GL(V_2)$, and call \defi{module} a $\GL$-equivariant $\Sym(V_1) \otimes \Sym(V_2)$-module. We have in particular for $\ll,\mu\in \P$ the free module
\[ F_\ll(V_1) \otimes F_\mu(V_2),\]
which is equipped with a bi-filtration by submodules, given by
\[
\mc{F}^{s,t}(F_\ll(V_1) \otimes F_\mu(V_2))= \mc{F}^s(F_\ll(V_1)) \otimes \mc{F}^t(F_\mu(V_2))\mbox{ for }s,t\in \bb{N}.
\]
The associated graded components are 
\[
\gr^{(s,t)} \big(F_\ll(V_1) \otimes F_\mu(V_2)\big)= \gr^s(F_\ll (V_1)) \oo \gr^t(F_\ll (V_2)).
\] 
As before, all module maps respect this filtration (since they are assumed to be $\GL$-equivariant).

All our modules are naturally bi-graded by placing each irreducible $\GL$-representation $\bb{S}_\delta V_1 \oo \bb{S}_\gamma V_2$ in bi-degree $(|\delta|,|\gamma|)$. We have a \defi{diagonal} functor $\Delta$ that picks up the symmetric bi-degrees in each module:
\[ \Delta(M) = \bigoplus_{d\geq 0} M_{(d,d)}.\]
If we apply $\Delta$ to the ring itself we obtain the coordinate ring of the Segre product $\bb{P}V_1\times\bb{P}V_2$:
\[ \Delta\bigl(\Sym(V_1) \otimes \Sym(V_2) \bigr) = \bigoplus_{d\geq 0} \Sym^d V_1 \oo \Sym^d V_2 = S/I_2,\]
where $S$ and $I_2$ are as defined in the Introduction. We assume that $|\ll|=|\mu|$ and obtain $S/I_2$-modules
\begin{eqnarray}
F_{\ll,\mu}  &=& \Delta \big(F_\ll(V_1) \otimes F_\mu(V_2)\big)= \bb{S}_\ll V_1 \otimes \bb{S}_\mu V_2 \otimes S/I_2,
\\
M_{\ll,\mu} &=& \Delta\big( M_\ll(V_1) \otimes M_\mu(V_2)\big)
 = \bigoplus_{d\geq 0} \bb{S}_{\ll_1+d, \ll_2, \ll_3, \ldots} V_1 \otimes \bb{S}_{\mu_1+d, \mu_2, \mu_3, \ldots} V_2 \label{charM}.
 \end{eqnarray}

 Applying the diagonal functor we obtain an induced bi-filtration on $F_{\ll,\mu}$, with associated graded components
\[
\gr^{(s,t)}(F_{\ll,\mu}) = \Delta\big(\gr^s(F_\ll (V_1)) \oo \gr^t(F_\mu (V_2))\big) = \bigoplus_{(\tau,\theta)\in \mc{N}^{s,t}_{\ll,\mu}} \bb{S}_\tau V_1 \oo \bb{S}_\theta V_2,
\]
where
\[
\mc{N}^{s,t}_{\ll,\mu} = \Big \{ (\tau,\theta)\in \P \times \P \, : \, \tau/\lambda, \, \theta/\mu\in \op{HS},\, s = \tau_2 + \tau_3 +\cdots,\, t = \theta_2+ \theta_3 + \cdots, \text{ and } |\tau| = |\theta|\Big\}.
\]
In other words,  we have a direct sum module decomposition
\[
\gr(F_{\ll,\mu}) = \bigoplus_{(\a,\b)\in \mc{N}_{\ll,\mu}} M_{\a,\b},
\]
where
\begin{equation}\label{eq:index-set-associated-graded}
\mc{N}_{\ll,\mu} = \Big \{ (\alpha, \beta)\in \P \times \P \, : \, \alpha/\lambda, \, \beta/\mu\in \op{HS} \text{  and  }
\left( \alpha_1 = \ll_1 \text{ or } \beta_1 = \mu_1\right)\Big\}.
\end{equation}

\begin{remark}\label{rem:basic-facts}
As for the case of one vector space $V$, the following facts hold:
\begin{enumerate}
\item The only submodules of $M_{\ll,\mu}$ are of the form $M_{(\ll_1 +d, \ll_2, \ll_3, \ldots),(\mu_1 +d, \mu_2, \mu_3, \ldots)}$, and they form a chain.
\item There exists a non-zero $S/I_2$-linear map  $M_{\ll,\mu} \rightarrow M_{\alpha, \beta}$ if and only if $\ll_1 \geq \alpha_1, \mu_1 \geq \beta_1, \ll_i = \alpha_i,  \mu_i=\beta_i$ for  $i \geq 2$;
 in this case, the map is injective and unique up to scalar.
\item If $\varphi: F_2 \rightarrow F_1$ is an equivariant map of  finite free $S/I_2$-modules, 
then for the associated graded map
$\gr(\varphi) : \gr(F_2) \rightarrow \gr(F_1)$
we have  $\varphi = \gr(\varphi)$ as $\GL$-equivariant maps (but not as maps of $S/I_2$-modules).
\item Specializing (3) to the case when $F_2=F_{\ll,\mu}$ and $F_1=F_{\a,\b}$, we get that $\gr(\varphi)$ includes each component $M_{\tau,\theta}$ into a corresponding $M_{\gamma,\delta}$ whenever possible.
More precisely,
we have $\gr(\varphi) (M_{\tau,\theta})\subseteq M_{\gamma,\delta}$ if 
\[(\tau,\theta)\in\mc{N}_{\ll,\mu},\quad (\gamma,\delta)\in\mc{N}_{\a,\b},\quad  \tau_1 \geq \gamma_1,\ \theta_1 \geq \delta_1,\quad \tau_i=\gamma_i,\ \theta_i=\delta_i\mbox{ for }i\geq 2,\]
and $\gr(\varphi) (M_{\tau,\theta})=0$ if no such $\gamma, \delta$ exist.
\end{enumerate}
\end{remark}

Explicit examples of the bi-filtrations discussed above, and the corresponding induced maps, will appear in the proof of Theorem~\ref{thm:H1K}.

\section{The first Koszul homology groups for $2\times 2$ minors}\label{sec:koszul}

The goal of this section is to describe the zeroth and first Koszul homology groups associated with the space $W\subset S$ spanned by the $2\times 2$ minors. We write $\mc{K}_{\bullet}(W)$ for the Koszul complex whose $i$-th term is
\[ \mc{K}_i(W) = \bw^i W \oo S,\]
and note that
\[ H_i(\mc{K}_{\bullet}(W)) = \Tor_i^R(S,\bb{C}).\]
Moreover, since $I_2= \langle W \rangle \subseteq S$ then 
\begin{equation}\label{eq:H0KW}
 H_0(\mc{K}_{\bullet}(W)) = S/I_2 = \bigoplus_{d\geq 0}\Sym^d V_1 \oo \Sym^d V_2
\end{equation}
is the homogeneous coordinate ring of the Segre product $\bb{P}V_1 \times \bb{P}V_2$. We prove the following.

\begin{theorem}\label{thm:H1K}
 Let $K=H_1(\mc{K}_{\bullet}(W))$. The graded components of $K$ are described as $\GL$-representations by $K_d = 0$ for $d<3$,
 \[ K_3=\bb{S}_{2,1}V_1\oo \bb{S}_{1,1,1} V_2 \oplus \bb{S}_{1,1,1} V_1 \oo \bb{S}_{2,1}V_2,\]
 \[ K_4=\bb{S}_{2,2}V_1\oo \bb{S}_{1,1,1,1} V_2 \oplus \bb{S}_{1,1,1,1} V_1 \oo \bb{S}_{2,2}V_2 \oplus \bb{S}_{2,1,1} V_1 \oo \bb{S}_{2,1,1}V_2 \oplus \bb{S}_{2,1,1} V_1 \oo \bb{S}_{3,1}V_2 \oplus \bb{S}_{3,1} V_1 \oo \bb{S}_{2,1,1}V_2,\]
 \[ K_{d} = \bb{S}_{d-2,1,1}V_1 \oo \bb{S}_{d-2,1,1}V_2 \oplus \bb{S}_{d-2,1,1} V_1 \oo \bb{S}_{d-1,1}V_2 \oplus \bb{S}_{d-1,1} V_1 \oo \bb{S}_{d-2,1,1}V_2,\mbox{ for }d\geq 5.\]
\end{theorem}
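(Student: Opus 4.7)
The strategy is to reduce the problem to a small low-degree computation and then propagate the answer to all degrees using the $S/I_2$-module structure on $K$, together with the filtration machinery of Section~\ref{subsec:filtrations-bigraded}. The zeroth homology is immediate from the definition of the Koszul complex: $H_0(\mc{K}_\bullet(W))=S/I_2$, and the $\GL$-decomposition (\ref{eq:H0KW}) is Cauchy's formula. The key structural observation for the first homology is that $K=\Tor_1^R(S,\bb{C})$ carries a natural $\GL$-equivariant action of $\Tor_0^R(S,\bb{C})=S/I_2$, so $K$ is a bi-graded equivariant $S/I_2$-module of the type governed by Remark~\ref{rem:basic-facts}.

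I would first check $K_d=0$ for $d\leq 2$ by inspection: the source $W\oo S_{d-2}$ is zero for $d<2$ and maps injectively to $S_d$ for $d=2$ since $W\hookrightarrow S_2$. The main low-degree calculations are those of $K_3$ and $K_4$. For $K_3$, the piece $\bw^2 W\oo S_{-1}$ vanishes, so $K_3=\ker(d_1:W\oo S_1\to S_3)$. By Pieri, $W\oo S_1=\big(\bb{S}_{2,1}V_1\oplus \bb{S}_{1,1,1}V_1\big)\oo\big(\bb{S}_{2,1}V_2\oplus \bb{S}_{1,1,1}V_2\big)$, while by Cauchy $S_3$ only contains diagonal isotypes $\bb{S}_\ll V_1\oo \bb{S}_\ll V_2$; Schur's lemma thus forces the two off-diagonal summands into $\ker d_1$, and explicit highest-weight vector computations (using $\det_2\cdot x_{1,1}$ and the Laplace expansion of $\det_3$, in the conventions of Section~\ref{subsec:highest-weight}) show the two diagonal summands map non-trivially. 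The more elaborate $K_4$ computation additionally requires identifying the image of $d_2:\bw^2 W\oo S_0\to W\oo S_2$, via the plethystic decompositions of $\bw^2 W$ and of $W\oo S_2=W\oo(W\oplus\ol W)$.

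From $K_3$ and $K_4$ I would read off the candidate $S/I_2$-module generators of $K$: each of the two summands of $K_3$ generates a copy of $M_{(2,1),(1,1,1)}$ or $M_{(1,1,1),(2,1)}$ in the notation of Section~\ref{subsec:filtrations-bigraded}; in $K_4$ there is one new non-torsion generator $\bb{S}_{2,1,1}\oo\bb{S}_{2,1,1}$ yielding $M_{(2,1,1),(2,1,1)}$, together with two torsion pieces $\bb{S}_{2,2}\oo\bb{S}_{1,1,1,1}$ and $\bb{S}_{1,1,1,1}\oo\bb{S}_{2,2}$. Applying (\ref{charM}) to the three $M$-modules produces exactly the families $\bb{S}_{d-1,1}\oo\bb{S}_{d-2,1,1}$, $\bb{S}_{d-2,1,1}\oo\bb{S}_{d-1,1}$ and $\bb{S}_{d-2,1,1}\oo\bb{S}_{d-2,1,1}$ in the statement, and these together with the degree $4$ torsion recover the claimed description of $K_d$. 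To conclude rigorously I would verify (a) that each candidate generator does generate the asserted $M_{\ll,\mu}$ (equivalently, that the action of $(S/I_2)_1$ on the generator is non-zero in the expected direction, which amounts to a concrete polynomial computation in $S$), and (b) that no additional isotypes appear in $K_d$ for $d\geq 5$, via an upper-bound argument: the multiplicity of $\bb{S}_\a V_1\oo\bb{S}_\b V_2$ in $K_d$ is bounded by its multiplicity in $W\oo S_{d-2}$, which is computable directly by Pieri.

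The main obstacle will be the $K_4$ calculation: amid the long list of $\GL$-irreducibles produced by the plethysms of $W\oo S_2$ and $\bw^2 W$, it takes careful Schur-lemma bookkeeping and explicit highest-weight computations to determine which components lie in $\ker d_1\setminus \op{im} d_2$, and to confirm the torsion behavior of the $\bb{S}_{2,2}\oo\bb{S}_{1,1,1,1}$ pair. Once the generators are correctly pinned down, the module-theoretic formalism of Section~\ref{subsec:filtrations-bigraded} combined with the Pieri upper-bound for higher degrees finishes the proof.
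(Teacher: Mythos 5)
Your proposal has the right overall shape: identify low-degree generators of $K$ as an $S/I_2$-module and propagate via the filtration machinery of Section~\ref{subsec:filtrations-bigraded}. But there is a genuine gap in your step (b), the upper bound.

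You propose to bound the multiplicity of an isotype in $K_d$ by its multiplicity in $W\oo S_{d-2}$ computed via Pieri/Cauchy. This bound is far too weak to be useful. Already for $d=5$, expanding $W\oo S_3$ by Cauchy and the Littlewood--Richardson rule yields $29$ irreducible summands, of which only $3$ survive to $K_5$; the bound alone cannot tell you which $26$ to discard. To compute $K_d=\ker(\pd_1)_d/\op{Im}(\pd_2)_d$ you need precise control of both $\ker(\pd_1)$ and $\op{Im}(\pd_2)$, and the crude Koszul-term bound supplies neither. In particular, you have no argument that $K$ is generated as an $S/I_2$-module in degrees $\leq 4$, so the claim that the three $M_{\ll,\mu}$ modules plus the degree-$4$ torsion exhaust $K$ is unsupported.

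The missing ingredient is the Lascoux minimal free resolution of $S/I_2$. Its first syzygy term is $W^1\oo S$ with $W^1=\bb{S}_{1,1,1}V_1\oo\bb{S}_{2,1}V_2\oplus\bb{S}_{2,1}V_1\oo\bb{S}_{1,1,1}V_2$ concentrated in degree $3$, and second syzygy term $W^2\oo S$ in degree $4$. Since $\ker(\pd_1)=\op{Im}(\delta_2)$ is exactly the first syzygy module, factoring $\pd_2$ through the Lascoux map $\delta_2$ and then tensoring with $S/I_2$ (using that $K$ is annihilated by $I_2$, per \cite[Prop.~17.14]{eisenbud-CA}) gives the much smaller $S/I_2$-module presentation
\[
\bigl(W^2\oplus\bw^2 W\bigr)\oo S/I_2 \xrightarrow{\ \varphi\ } W^1\oo S/I_2 \lra K \lra 0,
\]
which both forces $K$ to be generated in degree $3$ and makes the filtration analysis tractable. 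The "Schur-lemma bookkeeping" you anticipate is precisely the component-by-component computation of $\op{Im}(\gr(\varphi))$ on this presentation; applied directly to the Koszul differentials instead, the same bookkeeping would face $29$ rather than $16$ potential summands in degree $5$ alone, and worse, would have to be carried out in every degree $d\geq 5$ rather than just on a finite presentation. Your step (a) is on the right track, but it only gives a lower bound; without the Lascoux input you cannot close the gap from above.
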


Recall from the Introduction that $\ol{W}=\Sym^2 V_1 \oo \Sym^2 V_2$ is the space spanned by the $2\times 2$ permanents, and $\ol{I}_2=\langle \ol{W} \rangle\subset S$, then it follows by transpose duality from (\ref{eq:H0KW}) that
\[ H_0(\mc{K}_{\bullet}(\ol{W})) = S/\ol{I}_2 = \bigoplus_{d\geq 0} \bw^d V_1 \oo \bw^d V_2 = \bigoplus_{d=0}^n \bw^d V_1 \oo \bw^d V_2,\]
where the last equality follows since $\bw^d V_2=0$ for $d>n=\dim(V_2)$. Moreover, from Theorem~\ref{thm:H1K} we get:

\begin{theorem}\label{thm:H1K-bar}
 Let $\ol{K} = H_1(\mc{K}_{\bullet}(\ol{W}))$. We have $\ol{K}_d=0$ for $d<3$, and
\[ \ol{K}_3=\bb{S}_{2,1}V_1\oo \bb{S}_{3} V_2 \oplus \bb{S}_{3} V_1 \oo \bb{S}_{2,1}V_2,\]
 \[ \ol{K}_4=\bb{S}_{2,2}V_1\oo \bb{S}_{4} V_2 \oplus \bb{S}_{4} V_1 \oo \bb{S}_{2,2}V_2 \oplus \bb{S}_{3,1} V_1 \oo \bb{S}_{3,1}V_2 \oplus \bb{S}_{3,1} V_1 \oo \bb{S}_{2,1,1}V_2 \oplus \bb{S}_{2,1,1} V_1 \oo \bb{S}_{3,1}V_2,\]
 \[ \ol{K}_{d} = \bb{S}_{3,1^{d-3}}V_1 \oo \bb{S}_{3,1^{d-3}}V_2 \oplus \bb{S}_{3,1^{d-3}} V_1 \oo \bb{S}_{2,1^{d-2}}V_2 \oplus \bb{S}_{2,1^{d-2}} V_1 \oo \bb{S}_{3,1^{d-3}}V_2,\mbox{ for }d\geq 5.\]
 In particular, since $\bb{S}_{\ll}V_2=0$ when $\ll$ has more than $n$ parts, it follows that $\ol{K}_d=0$ for $d\geq n+3$. 
\end{theorem}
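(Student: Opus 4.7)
The plan is to derive Theorem~\ref{thm:H1K-bar} from Theorem~\ref{thm:H1K} by transpose duality, in direct analogy with the argument producing~\eqref{eq:tau12-Tor}. The key observation is that both $K_i=\Tor_i^{\mf{R}}(\mf{S},\bb{C})$ and $\ol{K}_i=\Tor_i^{\ol{\mf{R}}}(\mf{S},\bb{C})$ are bi-variate polynomial functors, computed by the Koszul complexes on $\mf{W}$ and $\ol{\mf{W}}$ respectively. Since $\tau_1\tau_2$ is exact and monoidal, sends $\mf{R}$-modules to $\ol{\mf{R}}$-modules, and fixes $\mf{S}$ (because $\tau_1\tau_2\mf{S}=\mf{S}$), the same reasoning that establishes~\eqref{eq:tau12-Tor} gives
\[
\tau_1\tau_2\Tor_i^{\mf{R}}(\mf{S},\bb{C})=\Tor_i^{\ol{\mf{R}}}(\mf{S},\bb{C})\quad\text{for all } i.
\]
In particular $\tau_1\tau_2 K=\ol{K}$, and the decomposition of $\ol{K}_d$ is obtained from that of $K_d$ by replacing each summand $\bb{S}_\ll\boxtimes\bb{S}_\mu$ with $\bb{S}_{\ll'}\boxtimes\bb{S}_{\mu'}$.

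Next, I would apply this transposition to each case of Theorem~\ref{thm:H1K}. The vanishing for $d<3$ transports directly. For the explicit formulas, use the routine conjugations $(2,1)'=(2,1)$, $(1^3)'=(3)$, $(2,2)'=(2,2)$, $(1^4)'=(4)$, $(2,1,1)'=(3,1)$, $(3,1)'=(2,1,1)$, and more generally $(d-2,1,1)'=(3,1^{d-3})$ and $(d-1,1)'=(2,1^{d-2})$ for $d\geq 5$. Substituting these into the three formulas of Theorem~\ref{thm:H1K} reproduces the claimed expressions for $\ol{K}_3$, $\ol{K}_4$, and $\ol{K}_d$ with $d\geq 5$ summand by summand.

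Finally, for the concluding vanishing assertion, observe that for $d\geq 5$ every $V_2$-factor appearing in $\ol{K}_d$ is either $\bb{S}_{(3,1^{d-3})}V_2$ (with $d-2$ parts) or $\bb{S}_{(2,1^{d-2})}V_2$ (with $d-1$ parts). Once $d\geq n+3$ we have $d-2>n=\dim(V_2)$, so all three summands of $\ol{K}_d$ vanish. The entire argument is formal once Theorem~\ref{thm:H1K} is in hand, so the main obstacle does not lie here but on the minors side; Theorem~\ref{thm:H1K-bar} is essentially a corollary obtained by transposing Young diagrams, the only subtle point being the categorical justification that transpose duality on bi-variate polynomial functors commutes with $\Tor$ computations over $\mf{R}$ versus $\ol{\mf{R}}$.
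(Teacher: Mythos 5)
Your proof is correct and mirrors the paper exactly: the paper obtains Theorem~\ref{thm:H1K-bar} from Theorem~\ref{thm:H1K} by the same transpose-duality step used to establish~\eqref{eq:tau12-Tor}, and then conjugates each partition, with the final vanishing for $d\geq n+3$ following from the observation that the $V_2$-factors $\bb{S}_{3,1^{d-3}}V_2$ and $\bb{S}_{2,1^{d-2}}V_2$ both involve partitions with more than $n$ parts once $d\geq n+3$. One small caveat: calling $\tau_1\tau_2$ ``monoidal'' is slightly loose (transpose duality swaps $\Sym^2$ and $\bw^2$ inside $\bb{T}^2$, so it is not monoidal for the ordinary tensor product), but what is actually needed --- that $\tau_1\tau_2$ carries the Koszul complex $\bw^{\bullet}\mf{W}\oo\mf{S}$ to $\bw^{\bullet}\ol{\mf{W}}\oo\mf{S}$, using $\tau(\bw^i\circ Q)\cong\bw^i\circ\tau(Q)$ for $Q$ of even degree --- is exactly the point the paper also leaves implicit, so your argument is on the same footing as theirs.
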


\begin{proof}[Proof of Theorem~\ref{thm:H1K}]
 We write $\pd_i$ for the $i$-th differential in $\mc{K}_{\bullet}(W)$, so that $K = \ker(\pd_1) / \op{Im}(\pd_2)$.  
 We recall the beginning of the  Lascoux minimal free resolution of $S/I_2$ (see  \cite[Section 6.1]{weyman})
  \[ W^3 \oo S \overset{\delta_4}{\lra} W^2 \oo S \overset{\delta_3}{\lra} W^1 \oo S \overset{\delta_2}{\lra} W\oo S \overset{\delta_1}{\lra} S\]
where
\begin{eqnarray*}
W^1 &=& \bb{S}_{1,1,1} V_1 \otimes \bb{S}_{2,1} V_2 
\, \oplus \,
\bb{S}_{2,1} V_1 \otimes \bb{S}_{1,1,1} V_2,
\\
W^2 & = & \bb{S}_{2,1,1} V_1 \otimes \bb{S}_{2,1,1} V_2 
\, \oplus \,
\bb{S}_{1,1,1,1} V_1 \otimes \bb{S}_{3,1} V_2 
\, \oplus \,
\bb{S}_{3,1} V_1 \otimes \bb{S}_{1,1,1,1} V_2, 
 \\
W^3 & = & 
\bb{S}_{3,1,1} V_1 \otimes \bb{S}_{2,1,1,1} V_2 
\, \oplus \,
\bb{S}_{2,1,1,1} V_1 \otimes \bb{S}_{3,1,1} V_2 
\, \oplus \,
\bb{S}_{2,2,2} V_1 \otimes \bb{S}_{2,2,2} V_2.
\end{eqnarray*}
We will also need the decomposition
\[
\bigwedge^2 W  = 
\bb{S}_{2,1,1} V_1 \otimes \bb{S}_{2,2} V_2 
\, \oplus \,
\bb{S}_{2,1,1} V_1 \otimes \bb{S}_{1,1,1,1} V_2 
  \oplus 
\bb{S}_{2,2} V_1 \otimes \bb{S}_{2,1,1} V_2 
\, \oplus \,
\bb{S}_{1,1,1,1} V_1 \otimes \bb{S}_{2,1,1} V_2.
\]

Since $\delta_1 = \pd_1$ we have that $K = \op{Im} (\delta_2) / \op{Im} (\pd_2).$
Factoring  $\pd_2 : \bigwedge^2 W \otimes S \rightarrow \ker(\pd_1) \subseteq W\otimes S$ 
through   $\delta_2 : W^1 \otimes S \twoheadrightarrow \ker(\delta_1)$
we obtain a  map ${\epsilon_2} : \bigwedge^2 W \otimes S \rightarrow W^1 \otimes S$ and a  linear  presentation 
$$
\left(W^2 \oplus\bigwedge^2 W\right) \otimes S \xrightarrow{\delta_3 \oplus {\epsilon_{2}}} W^1 \otimes S \rightarrow K \rightarrow 0.
$$
Tensoring with $S/I_2$ over $S$ we obtain the presentation 
\begin{equation}\label{eq:SI2-pres-K}
F_2 = \left(W^2 \oplus\bigwedge^2 W\right) \otimes S/I_2 \xrightarrow{\,\varphi \,= \,\overline{\delta_3} \oplus \ol{\epsilon_{2}}\,} F_1 = W^1 \otimes S/I_2 \rightarrow K \rightarrow 0
\end{equation}
where we used that $K$ is annihilated by $I_2$ (see \cite[Prop.~17.14]{eisenbud-CA}).
 
In order to find the $\GL$-decomposition of $K$ we determine the image of $\varphi$.
We consider the filtrations of $F_1, F_2$ constructed in Section \ref{subsec:filtrations-bigraded}.
By Remark \ref{rem:basic-facts} (3) it suffices to study the associated graded map $\gr(\varphi) : \gr(F_2) \rightarrow \gr(F_1)$.
The components of $\gr(F_1) = \gr(F_{(1,1,1),(2,1)} \oplus F_{(2,1),(1,1,1)})$ are determined by 
 \eqref{eq:index-set-associated-graded}:
for the summand $\gr(F_{(1,1,1),(2,1)})$ we have
\begin{align*}
&M_{(1,1,1),(2,1)}&
&M_{(1,1,1,1),(2,1,1)}&
&M_{(1,1,1,1),(2,2)}&
&M_{(1,1,1,1),(3,1)}&
\\
&M_{(2,1,1),(2,1,1)}&
&M_{(2,1,1),(2,2)}&
&M_{(2,1,1,1),(2,2,1)}&
&M_{(3,1,1),(2,2,1)}&
\end{align*}
and, symmetrically,   for the  summand $ \gr(F_{(2,1),(1,1,1)})$ we have
\begin{align*}
&M_{(2,1),(1,1,1)}&
&M_{(2,1,1),(1,1,1,1)}&
&M_{(2,2),(1,1,1,1)}&
&M_{(3,1),(1,1,1,1)}&
\\
&M_{(2,1,1),(2,1,1)}&
&M_{(2,2),(2,1,1)}&
&M_{(2,2,1),(2,1,1,1)}&
&M_{(2,2,1),(3,1,1)}.&
\end{align*}
We are going to calculate $M_{\lambda, \mu} \cap \op{Im}(\gr(\varphi)) $
for each of the 16 terms $M_{\lambda, \mu}$ above. 
Observe that each $M_{\lambda, \mu}$ appears exactly once, with the exception of $M_{(2,1,1),(2,1,1)}$,
therefore,
by symmetry, it suffices to deal with the first 8 cases and with the second copy $M_{(2,1,1),(2,1,1)}$ appearing in the last 8.
\begin{enumerate}
\item\label{ItemDisjoint}  $ M_{(1,1,1),(2,1)} \cap \op{Im}(\gr(\varphi)) = 0 $.
\\
It follows from Remark \ref{rem:basic-facts} (2) by inspecting the graded components of $\gr(F_2)$.
\item\label{ItemKoszul} $ M_{(1,1,1,1),(2,1,1)} \subseteq \op{Im}(\gr(\varphi)) $.
\\
 $\bb{S}_{1,1,1,1}V_1 \otimes \bb{S}_{2,1,1}V_2$ is part of the minimal generating set of $\bigwedge^2 W \otimes S/I_2$.
Its image is  non-zero under $\pd_2$, therefore also under the lift $\epsilon_2$, 
and by degree reasons also under $\ol{\epsilon_2}$.
Thus $M_{(1,1,1,1),(2,1,1)}\subseteq \op{Im}(\gr(\varphi)) $ by Remark \ref{rem:basic-facts} (2).

\item\label{ItemSyzygy} $ M_{(1,1,1,1),(2,2)} \cap \op{Im}(\gr(\varphi)) =  M_{(2,1,1,1),(3,2)}  $.
\\
$\bb{S}_{1,1,1,1}V_1\otimes \bb{S}_{2,2}V_2$ does not appear in $F_2$. 
On the other hand, 
$\bb{S}_{2,1,1,1}V_1\otimes \bb{S}_{3,2}V_2$
appears in $W^2\otimes S$, and its image in $W^1\otimes S$ under  $\delta_3$ is non-zero because $\bb{S}_{2,1,1,1}V_1\otimes \bb{S}_{3,2}V_2$ does not appear in $W^3\otimes S$.
Note that $\bb{S}_{2,1,1,1}V_1\otimes \bb{S}_{3,2}V_2$ appears  once in $W^1\otimes S$, 
hence it avoids the kernel of $W^1\otimes S \twoheadrightarrow W^1\otimes S/I_2$.
We deduce  that the image of $\bb{S}_{2,1,1,1}V_1\otimes \bb{S}_{3,2}V_2$ under $\ol{\delta_3}$ is non-zero,
and
the desired conclusion follows by Remark \ref{rem:basic-facts} (2).

\item\label{ItemLascoux} $ M_{(1,1,1,1),(3,1)} \subseteq \op{Im}(\gr(\varphi)) $.
\\
$\bb{S}_{1,1,1,1}V_1 \otimes \bb{S}_{3,1}V_2$ is part of the minimal generating set of $ W^2 \otimes S/I_2$.
Its image is  non-zero under  $\delta_3$, 
and by degree reasons also under $\ol{\delta_3}$.
Thus $ M_{(1,1,1,1),(3,1)} \subseteq \op{Im}(\gr(\varphi)) $  by Remark \ref{rem:basic-facts} (2).
\item  $M_{(2,1,1),(2,1,1)}^{\oplus 2} \cap \op{Im}(\gr(\varphi)) \cong M_{(2,1,1),(2,1,1)}$.
\\
By Remark \ref{rem:basic-facts} (2)
the only $M_{\ll,\mu}\subseteq \gr(F_1)$ that can map to  $M_{(2,1,1),(2,1,1)}$
comes from the subspace $\bb{S}_{2,1,1}V_1 \otimes \bb{S}_{2,1,1}V_2$ that is part of the minimal generating set of $W^2\otimes S/I_2$.
As in item \eqref{ItemLascoux} we conclude that $\op{Im}(\gr(\varphi))$ contains (exactly) one copy of $M_{(2,1,1),(2,1,1)}$.
\item $ M_{(2,1,1),(2,2)} \subseteq \op{Im}(\gr(\varphi)) $ as in item \eqref{ItemKoszul}.
\item $ M_{(2,1,1,1),(2,2,1)} \subseteq \op{Im}(\gr(\varphi)) $ as in the second part of  item \eqref{ItemSyzygy}.
\item $ M_{(3,1,1),(2,2,1)} \subseteq \op{Im}(\gr(\varphi)) $ as in the second part of  item \eqref{ItemSyzygy}.
\end{enumerate}

We conclude that the associated graded module of the induced filtration on $K$ is
\[
\gr(K) =  M_{(1,1,1),(2,1)} \oplus  M_{(2,1),(1,1,1)} \oplus M_{(2,1,1),(2,1,1)} \oplus \bb{S}_{1,1,1,1}V_1\otimes \bb{S}_{2,2}V_2 \oplus \bb{S}_{2,2}V_1\otimes \bb{S}_{1,1,1,1}V_2.
\]
By (\ref{charM}), this implies  the $\GL$-decomposition stated in the theorem.
\end{proof}

\section{Filtrations on the second Veronese subring}\label{sec:filtrations-Vero}

We let $S^{(2)}$ denote the second Veronese subring of $S$, with the grading given by
\[ S^{(2)}_d = \Sym^{2d}(V_1\oo V_2) = \bigoplus_{\ll\vdash 2d}\bb{S}_{\ll}V_1 \oo \bb{S}_{\ll}V_2.\]
We can think of $S^{(2)}$ as a graded module over the (standard) graded polynomial ring $R$, and it follows from Section~\ref{sec:koszul} that
\begin{equation}\label{eq:Tor0-R-S2}
\Tor_0^R(S^{(2)},\bb{C})_d = \Sym^{2d}V_1 \oo \Sym^{2d}V_2,
\end{equation}
and (see Theorem~\ref{thm:H1K} for the description of $K$)
\[ \Tor_1^R(S^{(2)},\bb{C})_d = K_{2d}.\]
The $R$-submodule of $S^{(2)}$ generated by $1$ is $A$, so it is clear that $\Tor_0^R(A,\bb{C}) = \bb{C}$ (concentrated in degree $0$). 

The formula (\ref{eq:Tor0-R-S2}), which describes the minimal generators of $S^{(2)}$ as an $R$-module, provides a natural increasing filtration of $S^{(2)}$ by $R$-submodules
\begin{equation}\label{eq:filtration-M}
 A = M_0 \subseteq M_1 \subseteq M_2 \subseteq \cdots \subseteq S^{(2)},
\end{equation}
where $M_i$ is the $R$-submodule generated by $\bigoplus_{d=0}^i \Sym^{2d}V_1 \oo \Sym^{2d}V_2$.

The goal of this section is to prove the following:

\begin{theorem}\label{thm:Tor-grM}
 For each $r>0$ and for $i=0,1,2$ we have that
 \[ \Tor_i^R(M_r/M_{r-1},\bb{C})_j = 0\mbox{ if }j\neq i+r.\]
\end{theorem}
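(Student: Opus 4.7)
The plan is to handle the case \(i=0\) elementarily via Pieri's rule, and then use the Kempf--Weyman geometric technique together with Bott's theorem to address \(i=1,2\).

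For \(i=0\), note that by the definition of the filtration \eqref{eq:filtration-M} the module \(M_r\) is generated over \(R\) by \(\bigoplus_{e\leq r}\Sym^{2e}V_1\oo\Sym^{2e}V_2\), and \(M_{r-1}\) by the analogous sum over \(e\leq r-1\). Hence \((M_r/M_{r-1})_j=0\) for \(j<r\), while for \(j>r\) the component \((M_r/M_{r-1})_j\) equals \(R_+\cdot (M_r/M_{r-1})_{j-1}\). It remains to prove that the intersection \((M_{r-1})_r\cap\bigl(\Sym^{2r}V_1\oo\Sym^{2r}V_2\bigr)\) is zero, which will give \(\Tor_0^R(M_r/M_{r-1},\bb{C})_r=\Sym^{2r}V_1\oo\Sym^{2r}V_2\). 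Since \((M_{r-1})_r=W\cdot(M_{r-1})_{r-1}\subseteq W\cdot S^{(2)}_{r-1}\) and Pieri's rule implies that multiplication by \(W=\SS_{(1,1)}V_1\oo\SS_{(1,1)}V_2\) only produces the isotypic components \(\SS_\nu V_1\oo\SS_\nu V_2\) with \(\nu/\mu\) a vertical strip of size \(2\), and since no \(\mu\vdash 2r-2\) yields the single-row partition \(\nu=(2r)\) by a vertical strip, the claim follows.

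For \(i=1,2\), the plan is to apply the Kempf--Weyman geometric technique: identify a suitable flag variety \(Z\) together with an equivariant vector bundle \(\eta\subseteq W\otimes\mc{O}_Z\) giving rise to a geometric model for \(M_r/M_{r-1}\). A natural candidate for \(Z\) is a product of Grassmannians such as \(\op{Gr}_2(V_1)\times\op{Gr}_2(V_2)\), reflecting the factorization \(W=\bw^2V_1\oo\bw^2V_2\), with \(\eta\) built from the tautological rank-two subbundles \(\mc{R}_i\); the generators \(\Sym^{2r}V_1\oo\Sym^{2r}V_2\) then arise as global sections of a suitable equivariant line bundle, and the Koszul complex of \(\eta\hookrightarrow W\otimes\mc{O}_Z\) pushes down to the start of the minimal free resolution of \(M_r/M_{r-1}\) over \(R\). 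The Betti numbers at homological step \(i\) are controlled by cohomology groups of the form \(H^\bullet\bigl(Z,\bw^i\eta\oo\mc{F}\bigr)\) for an appropriate equivariant sheaf \(\mc{F}\) on \(Z\), and by Bott's theorem on each Grassmannian factor, the only surviving contributions for \(i=0,1,2\) lie in internal degree \(r+i\), which will complete the proof.

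The hard part will be the precise identification of the geometric model --- specifically, pinning down the bundle \(\eta\) and the sheaf \(\mc{F}\) that correctly realize the filtration layer \(M_r/M_{r-1}\) rather than all of \(S^{(2)}\), and verifying that the pushforward recovers \(M_r/M_{r-1}\) compatibly with its \(R\)-module structure. Once the correct setup is in place, the Bott cohomology calculation is mechanical but requires careful tracking of weights, particularly to rule out contributions coming from partitions that have too many parts in the quotient bundles \(\mc{Q}_i\) and would otherwise contribute in degrees \(j>i+r\).
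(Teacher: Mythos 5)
Your argument for $i=0$ is essentially correct, though it proves slightly more than needed: since $M_r$ and $M_{r-1}$ are both generated by $R$-submodules that contain $S^{(2)}_j$ for all $j$ up to $r$ and $r-1$ respectively, one has $(M_r)_j=(M_{r-1})_j=S^{(2)}_j$ for $j<r$, and $M_r/M_{r-1}$ is generated in degree $\le r$, which already forces $\Tor_0^R(M_r/M_{r-1},\bb{C})_j=0$ for $j\ne r$; the Pieri computation identifying $\Tor_0$ with $\Sym^{2r}V_1\oo\Sym^{2r}V_2$ is correct but not required for the vanishing statement.

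For $i=1,2$, however, the geometric model you propose is the wrong one, and this is a genuine gap rather than a matter of detail. Taking $Z=\op{Gr}_2(V_1)\times\op{Gr}_2(V_2)$ with the rank-two tautological subbundles gives (via $\bw^2\mc{R}_1\boxtimes\bw^2\mc{R}_2\subset W\oo\mc{O}_Z$) a desingularization of the affine cone over the Segre product of the two Grassmannians, whose coordinate ring is $\bigoplus_d\bb{S}_{(d,d)}V_1\oo\bb{S}_{(d,d)}V_2$; this is not $A$ and bears no direct relationship to the filtration layers $M_r/M_{r-1}$. Moreover, $\Sym^{2r}V_1\oo\Sym^{2r}V_2$ does not arise as global sections of a line bundle on $\op{Gr}_2(V_1)\times\op{Gr}_2(V_2)$. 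The correct base for the geometric technique here is $X=\bb{P}V_1\times\bb{P}V_2$, which is forced by the structure of the problem: all the modules $M_r/M_{r-1}$ are annihilated by $I_2$ (being subquotients of $S^{(2)}/\langle W\rangle$-modules after taking $\Tor$), and $S/I_2$ is precisely the homogeneous coordinate ring of the Segre $\bb{P}V_1\times\bb{P}V_2$. With $\mc{Q}_i=\mc{O}_{\bb{P}V_i}(1)$, $\mc{R}_i$ the rank-$(\dim V_i-1)$ tautological subbundle, $\eta=\pi_1^*(\mc{R}_1\oo\mc{Q}_1)\oo\pi_2^*(\mc{R}_2\oo\mc{Q}_2)$ as a quotient of $W\oo\mc{O}_X$ with kernel $\xi$, and $\mc{L}=\pi_1^*\mc{Q}_1\oo\pi_2^*\mc{Q}_2$, one has $M_r/M_{r-1}=H^0(X,\mc{L}^{2r}\oo\Sym\eta)$ with vanishing higher cohomology, and the twist by $\mc{L}^{2r}$ is exactly what isolates the $r$-th filtration layer of $S^{(2)}$. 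The Kempf--Weyman technique then gives $\Tor_i^R(N_r,\bb{C})_{r+i+j}=H^j(X,\bw^{i+j}\xi\oo\mc{L}^{2r})$, and a Bott-theorem estimate (Lemma~\ref{lem:vanishing-xi1-xi2}, using a filtration of $\bw^{i+j}\xi$ by $\bw^u\xi_1\oo\bw^v\xi_2$) kills all terms with $j\ge 1$ when $i\le 2$. Finally, you correctly flagged as delicate the step of matching the $R$-module structure, not merely the $\GL$-structure: the paper handles this by computing $\Tor_0$ and $\Tor_1$ of $N_r$ explicitly, comparing minimal presentations of $N_r$ and $M_r/M_{r-1}$ over $R$, and showing the resulting surjection $N_r\twoheadrightarrow M_r/M_{r-1}$ must be an isomorphism since the two agree as $\GL$-representations. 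Without nailing down the correct geometry and this last identification, the argument does not go through.
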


Applying transpose duality, it follows from (\ref{eq:filtration-M}) that we have a filtration
\begin{equation}\label{eq:filtration-Mbar}
 \ol{A} = \ol{M}_0 \subseteq  \ol{M}_1 \subseteq  \ol{M}_2 \subseteq \cdots \subseteq S^{(2)},
\end{equation}
where $\ol{M}_i$ is the $\ol{R}$-submodule generated by $\bigoplus_{d=0}^i \bw^{2d}V_1 \oo \bw^{2d}V_2$. Unlike (\ref{eq:filtration-M}), the filtration (\ref{eq:filtration-Mbar}) is finite, and we have $\ol{M}_{\lfloor n/2\rfloor}=S^{(2)}$. It follows from Theorem~\ref{thm:Tor-grM} that for $i=0,1,2$ we have
 \begin{equation}\label{eq:TorMbar-vanish}
  \Tor_i^{\ol{R}}(\ol{M}_r/\ol{M}_{r-1},\bb{C})_j = 0\mbox{ if }j\neq i+r.
 \end{equation}

\begin{corollary}\label{cor:Tor1-low-degree}
 We have that $\Tor_1^{\ol{R}}(\ol{A},\bb{C})_j=0$ for $j>\lfloor n/2\rfloor+2$.
\end{corollary}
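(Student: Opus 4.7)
The plan is to climb the finite filtration~\eqref{eq:filtration-Mbar} one step at a time, using the vanishings in~\eqref{eq:TorMbar-vanish} to propagate the $\Tor_1$ calculation from $\ol{A}$ up to $S^{(2)}$, and then to read off the desired vanishing from Theorem~\ref{thm:H1K-bar}. Fix $j > \lfloor n/2\rfloor + 2$. For each $r \in \{1,\ldots,\lfloor n/2\rfloor\}$, the short exact sequence $0 \to \ol{M}_{r-1} \to \ol{M}_r \to \ol{M}_r/\ol{M}_{r-1} \to 0$ yields a long exact sequence in Tor, from which I would extract the four-term piece
\[
\Tor_2^{\ol{R}}(\ol{M}_r/\ol{M}_{r-1},\bb{C})_j \lra \Tor_1^{\ol{R}}(\ol{M}_{r-1},\bb{C})_j \lra \Tor_1^{\ol{R}}(\ol{M}_r,\bb{C})_j \lra \Tor_1^{\ol{R}}(\ol{M}_r/\ol{M}_{r-1},\bb{C})_j.
\]
Since $j > r+2$, neither $j = r+1$ nor $j = r+2$ can hold, and so \eqref{eq:TorMbar-vanish} makes the outer terms zero and the middle arrow an isomorphism. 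Telescoping these isomorphisms for $r$ ranging from $1$ to $\lfloor n/2\rfloor$, and using $\ol{M}_0 = \ol{A}$ together with $\ol{M}_{\lfloor n/2\rfloor} = S^{(2)}$, reduces the claim to the vanishing $\Tor_1^{\ol{R}}(S^{(2)},\bb{C})_j = 0$.

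For this last vanishing I would invoke the permanents analogue of the identity $\Tor_1^R(S^{(2)},\bb{C})_d = K_{2d}$ noted at the start of Section~\ref{sec:filtrations-Vero}, namely $\Tor_1^{\ol{R}}(S^{(2)},\bb{C})_j = \ol{K}_{2j}$. This follows either directly from the subcomplex $\bw^\bullet\ol{W} \oo S^{(2)}$ of $\mc{K}_\bullet(\ol{W})$ computing $\Tor^{\ol{R}}(S^{(2)},\bb{C})$, or by applying $\tau_1\tau_2$ and using that $\mf{S}^{(2)}$ is self-dual under transpose duality, since each summand $\SS_\ll V_1 \oo \SS_\ll V_2$ is sent to $\SS_{\ll'}V_1 \oo \SS_{\ll'}V_2$ with $|\ll|=|\ll'|$. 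Theorem~\ref{thm:H1K-bar} then supplies $\ol{K}_d = 0$ for $d \geq n+3$, and a short arithmetic check in both parities of $n$ confirms that $j > \lfloor n/2\rfloor + 2$ forces $2j \geq n+3$.

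This is essentially bookkeeping on top of the structural results already proved, and no serious obstacle is expected; the one place requiring care is the passage between $\ol{R}$-degree (in which $j$ lives) and the $S$-degree that grades $\ol{K}$, which doubles on the final step and is exactly what dictates the precise shape $\lfloor n/2\rfloor + 2$ of the bound.
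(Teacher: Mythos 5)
Your proof is correct and follows essentially the same route as the paper: climb the finite filtration $\ol{A}=\ol{M}_0\subseteq\cdots\subseteq\ol{M}_{\lfloor n/2\rfloor}=S^{(2)}$, kill the $\Tor_1$ and $\Tor_2$ of the quotients in the relevant degrees via \eqref{eq:TorMbar-vanish}, and reduce to $\Tor_1^{\ol{R}}(S^{(2)},\bb{C})_j=\ol{K}_{2j}=0$ from Theorem~\ref{thm:H1K-bar}. The paper phrases this as a descending induction and only needs injectivity of $\Tor_1^{\ol{R}}(\ol{M}_{r-1},\bb{C})_j\to\Tor_1^{\ol{R}}(\ol{M}_r,\bb{C})_j$ (using $\Tor_2$ of the quotient), whereas you upgrade this to an isomorphism by also invoking the $\Tor_1$ vanishing of the quotient; this is a cosmetic strengthening, not a different argument.
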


\begin{proof} We prove by descending induction on $r$ that $\Tor_1^{\ol{R}}(\ol{M}_r,\bb{C})_j=0$ for $j>\lfloor n/2\rfloor+2$. The base case is $r=\lfloor n/2\rfloor$, which follows from Theorem~\ref{thm:H1K-bar}, since
\[ \Tor_1^{\ol{R}}(S^{(2)},\bb{C})_j = \ol{K}_{2j}=0\mbox{ for }2j\geq n+3.\]
For the induction step, we have an exact sequence
\[\cdots \lra \Tor_2^{\ol{R}}(\ol{M}_r/\ol{M}_{r-1},\bb{C}) \lra \Tor_1^{\ol{R}}(\ol{M}_{r-1},\bb{C}) \lra \Tor_1^{\ol{R}}(\ol{M}_r,\bb{C}) \lra \cdots\]
and the desired conclusion follows using (\ref{eq:TorMbar-vanish}) with $i=2$, and the induction hypothesis.
\end{proof}

We now proceed with the proof of Theorem \ref{thm:Tor-grM}. The module $M_r/M_{r-1}$ has space of minimal generators given by $\Sym^{2r}V_1\oo\Sym^{2r}V_2$. In higher degrees, the $\GL$-equivariant structure is given as follows (see also \cite[Remark~1.9]{BCV}).

\begin{lemma}\label{lem:dec-Mr-Mr-1}
The $\GL$-decomposition of $M_r/M_{r-1}$ is given by
\begin{equation}\label{eq:decomp-Mr-Mr-1}
\bigoplus_{\ll\in\mc{M}_r\setminus\mc{M}_{r-1}} \bb{S}_{\ll}V_1 \oo \bb{S}_{\ll}V_2,
\end{equation}
where $\mc{M}_r = \{ \ll\in\mc{P}_{even} : 2\ll_1 - |\ll| \leq 2r\}$.
\end{lemma}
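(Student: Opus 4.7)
The plan is to establish the stronger identity
\[
M_r = \bigoplus_{\ll \in \mc{M}_r} \bb{S}_\ll V_1 \otimes \bb{S}_\ll V_2
\]
as $\GL$-subrepresentations of $S^{(2)}$; the lemma then follows immediately by subtracting $M_{r-1}$ from $M_r$. Both inclusions use the Cauchy decomposition \eqref{eq:decomp-S}, the structure of $A$ recorded in \eqref{eq:GL-dec-A}, and Pieri's rule.

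For the inclusion $\subseteq$: since the $R$-action on $S^{(2)}$ factors through the surjection $R \twoheadrightarrow A$, the module $M_r$ equals the image of the multiplication map
\[
A \otimes \bigoplus_{d=0}^{r}\bigl(\Sym^{2d}V_1 \otimes \Sym^{2d}V_2\bigr) \lra S^{(2)}.
\]
For $\mu \in \mc{M}_0$ and $d \leq r$, Pieri's rule gives
\[
\bb{S}_\mu V_i \otimes \Sym^{2d} V_i = \bigoplus_{\substack{\nu/\mu \in \op{HS} \\ |\nu/\mu|=2d}} \bb{S}_\nu V_i,
\]
so every Schur component of $M_r$ is of the form $\bb{S}_\nu V_1 \otimes \bb{S}_\nu V_2$ with $\nu/\mu$ a horizontal strip of size $2d$ for some $\mu \in \mc{M}_0$ and $d\leq r$. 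Using $\nu_1 - \mu_1 \leq |\nu/\mu| = 2d$ and $2\mu_1 \leq |\mu|$ (which is the defining condition $\mu \in \mc{M}_0$), I get
\[
2\nu_1 - |\nu| \;=\; 2(\nu_1 - \mu_1) + (2\mu_1 - |\mu|) - 2d \;\leq\; 4d + 0 - 2d \;=\; 2d \;\leq\; 2r,
\]
so $\nu \in \mc{M}_r$, as required.

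For the reverse inclusion: given $\ll \in \mc{M}_r$, I set $s = \max\bigl(0,\, \ll_1 - |\ll|/2\bigr)$, so that $s \leq r$ by hypothesis, and define $\mu = (\ll_1 - 2s,\, \ll_2,\, \ll_3, \ldots)$. A routine case check ($s=0$ versus $s>0$) shows that $\mu$ is an even partition lying in $\mc{M}_0$, and that $\ll/\mu$ is a horizontal strip of size $2s$ confined to the first row. By Pieri, $\bb{S}_\ll V_i$ appears with multiplicity one in $\bb{S}_\mu V_i \otimes \Sym^{2s} V_i$, so to conclude $\bb{S}_\ll V_1 \otimes \bb{S}_\ll V_2 \subseteq M_r$ it suffices to see that the multiplication
\[
\bb{S}_\mu V_1 \otimes \bb{S}_\mu V_2 \otimes \Sym^{2s} V_1 \otimes \Sym^{2s} V_2 \lra S^{(2)}
\]
is non-zero on the $\bb{S}_\ll V_1 \otimes \bb{S}_\ll V_2$ isotypic component. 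I will verify this on the highest-weight line using the column formula $\det_\nu = \prod_i \det_{\nu_i'}$ from Section~\ref{subsec:highest-weight}: since $\ll'$ agrees with $\mu'$ except in the columns $\mu_1+1,\ldots,\ll_1$, where each entry equals $1$ and contributes a factor $x_{1,1}$, one gets the explicit identity $\det_\mu \cdot \det_{(2s)} = \det_\ll$, confirming non-vanishing.

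I do not anticipate any real obstacle; the argument is essentially a partition bookkeeping check combined with two applications of Pieri's rule and an explicit highest-weight product. The only substantive step is the non-vanishing of the relevant multiplication, which is made transparent by the conjugate-partition formula for $\det_\ll$.
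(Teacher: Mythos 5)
Your proposal is correct and follows essentially the same route as the paper: Pieri's rule applied to $A\otimes\Sym^{2d}V_1\otimes\Sym^{2d}V_2$ gives the containment $M_r\subseteq\bigoplus_{\ll\in\mc{M}_r}\bb{S}_\ll V_1\oo\bb{S}_\ll V_2$, and the reverse inclusion is checked on highest-weight vectors via the identity $\det_\mu\cdot x_{1,1}^{2s}=\det_\ll$. The only cosmetic difference is that you choose $s=\max(0,\ll_1-|\ll|/2)$ to handle every $\ll\in\mc{M}_r$ at once, while the paper restricts to $\ll\in\mc{M}_r\setminus\mc{M}_{r-1}$ (where parity forces $2\ll_1-|\ll|=2r$) and takes $s=r$.
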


\begin{proof} To prove this statement, we need to check that
\[M_r = \bigoplus_{\ll\in\mc{M}_r} \bb{S}_{\ll}V_1 \oo \bb{S}_{\ll}V_2.\]
The case $r=0$ follows from (\ref{eq:GL-dec-A}). Since $M_r/M_{r-1}$ is generated by $\Sym^{2r}V_1\oo\Sym^{2r}V_2$, it is a quotient of
\[\Sym^{2r}V_1\oo\Sym^{2r}V_2 \oo A.\]
Using Pieri's rule, if $\ll\in\mc{M}_0$ then
\[\bb{S}_{\ll}V \oo \Sym^{2r}V = \bigoplus_{\mu} \bb{S}_{\mu}V,\]
where $\mu$ varies over partitions in $\mc{M}_r$. It follows that $M_r/M_{r-1}$ is a subrepresentation of (\ref{eq:decomp-Mr-Mr-1}). To prove the reverse inclusion, it suffices to check that if $\ll\in\mc{M}_r$ then a highest weigh vector in $\bb{S}_{\ll}V_1\oo\bb{S}_{\ll}V_2$ belongs to $M_r$ (see Section~\ref{subsec:highest-weight} for the notation). Note that if $\ll\in\mc{M}_r\setminus\mc{M}_{r-1}$ then $\ll_1-2r\geq\ll_2$, so $\mu=(\ll_1-2r,\ll_2,\ll_3,\cdots)$ satisfies $\mu\in\mc{M}_0$, and the corresponding highest weight vectors satisfy
\[ \det_{\ll} = x_{1,1}^{2r}\cdot \det_{\mu}.\]
Since $\det_{\mu}\in A=M_0$ and $x_{1,1}^{2r}\in\Sym^{2r}V_1\oo\Sym^{2r}V_2$, it follows that $\det_{\ll}\in M_r$.
\end{proof}

We are going to realize each $M_r/M_{r-1}$ as the global sections of a vector bundle, and compute its syzygies using the Kempf--Weyman geometric technique.
 We let $X=\bb{P}V_1 \times \bb{P}V_2$, where $\bb{P}V_i$ is the projective space of one dimensional quotients of $V_i$. We consider the tautological exact sequence on $\bb{P}V_i$,
\begin{equation}\label{eq:tautological-ses}
 0\lra \mc{R}_i \lra V_i \oo \mc{O}_{\bb{P}V_i} \lra \mc{Q}_i \lra 0,
\end{equation}
where $\mc{Q}_i$ is the tautological quotient line bundle on $\bb{P}V_i$ (often denoted $\mc{O}_{\bb{P}V_i}(1)$). Using \cite[Exercise~II.5.16]{hartshorne}, we get exact sequences
\begin{equation}\label{eq:ses-wedge2}
 0\lra \bw^2\mc{R}_i \lra \bw^2V_i \oo \mc{O}_{\bb{P}V_i} \lra \mc{R}_i \oo \mc{Q}_i \lra 0.
\end{equation}
We write $\pi_i:X \lra \bb{P}V_i$ for the natural projection maps, and define $\eta = \pi_1^*(\mc{R}_1 \oo \mc{Q}_1) \oo \pi_2^*(\mc{R}_2 \oo \mc{Q}_2)$, $\mc{L} = \pi_1^*(\mc{Q}_1) \oo \pi_2^*(\mc{Q}_2)$.
Pulling back to $X$ the two sequences in (\ref{eq:ses-wedge2}) and tensoring them together, we get an exact sequence
\[ 0 \lra \xi \lra W \oo \mc{O}_X \lra \eta \lra 0,\]
where $\xi$ is a locally free sheaf and can also be obtained as an extension
\[0 \lra \xi_1 \lra \xi \lra \xi_2 \lra 0\]
where
\[\xi_1 = \bw^2 V_1 \oo \pi_2^*\left(\bw^2\mc{R}_2\right)\mbox{ and }\xi_2=\pi_1^*\left(\bw^2\mc{R}_1\right) \oo \pi_2^*\left(\mc{R}_2\oo \mc{Q}_2\right).\]
We will need the following consequence of Bott's theorem \cite[Corollary 4.1.9]{weyman}.

\begin{lemma}\label{lem:vanishing-xi1-xi2}
 Suppose that $u,v\in\bb{Z}_{\geq 0}$, $j,r\in\bb{Z}_{\geq 1}$, and $u+v\leq j+2$. We have
 \[ H^j\left(X,\mc{L}^{2r}\oo\bw^u\xi_1 \oo \bw^v\xi_2\right) = 0.\]
\end{lemma}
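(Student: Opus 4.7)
The plan is to decompose the bundle using Cauchy's formula applied to the external tensor product structures of $\xi_1$ and $\xi_2$, factor the cohomology via Künneth, and then apply Bott's theorem on each projective factor, using standard weight bounds to derive a contradiction with $u+v\leq j+2$.

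Since $\xi_1 = \bw^2 V_1 \oo \pi_2^*(\bw^2\mc{R}_2)$ and $\xi_2 = \pi_1^*(\bw^2\mc{R}_1) \oo \pi_2^*(\mc{R}_2 \oo \mc{Q}_2)$ are external tensor products (in the first case one factor is trivial), Cauchy's formula $\bw^k(E\oo F)=\bigoplus_{\ll\vdash k}\bb{S}_\ll E\oo\bb{S}_{\ll'}F$ yields
\[
\bw^u\xi_1 = \bigoplus_{\ll\vdash u} \bb{S}_\ll(\bw^2 V_1) \oo \pi_2^*\bb{S}_{\ll'}(\bw^2\mc{R}_2),\quad \bw^v\xi_2 = \bigoplus_{\mu\vdash v} \pi_1^*\bb{S}_\mu(\bw^2\mc{R}_1) \oo \pi_2^*\bb{S}_{\mu'}(\mc{R}_2\oo\mc{Q}_2).
\]
Combining with $\bb{S}_{\mu'}(\mc{R}_2\oo\mc{Q}_2) = \bb{S}_{\mu'}(\mc{R}_2)\oo\mc{Q}_2^v$, each summand of $\mc{L}^{2r}\oo\bw^u\xi_1\oo\bw^v\xi_2$ is (up to a trivial factor $\bb{S}_\ll(\bw^2 V_1)$) an external tensor product of a bundle on $\bb{P}V_1$ and one on $\bb{P}V_2$. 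The Künneth formula then reduces the desired vanishing to showing that for every $\ll\vdash u$, $\mu\vdash v$, and $a+b=j$, at least one of
\[
H^a\bigl(\bb{P}V_1,\,\bb{S}_\mu(\bw^2\mc{R}_1) \oo \mc{Q}_1^{2r}\bigr),\qquad H^b\bigl(\bb{P}V_2,\,\bb{S}_{\ll'}(\bw^2\mc{R}_2) \oo \bb{S}_{\mu'}(\mc{R}_2) \oo \mc{Q}_2^{v+2r}\bigr)
\]
is zero.

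To check this, I would expand $\bb{S}_\mu(\bw^2\mc{R}_1) = \bigoplus\bb{S}_\nu\mc{R}_1$ and $\bb{S}_{\ll'}(\bw^2\mc{R}_2)\oo\bb{S}_{\mu'}(\mc{R}_2) = \bigoplus\bb{S}_\s\mc{R}_2$, and apply Bott's theorem \cite[Corollary 4.1.9]{weyman}: for $a\geq 1$, non-vanishing of $H^a(\bb{P}V_1,\bb{S}_\nu\mc{R}_1\oo\mc{Q}_1^{2r})$ forces $\nu_a \geq 2r+a+1$; symmetrically, for $b\geq 1$, non-vanishing of $H^b(\bb{P}V_2,\bb{S}_\s\mc{R}_2\oo\mc{Q}_2^{v+2r})$ forces $\s_b \geq v+2r+b+1$. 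The key weight bounds I need are $\nu_1 \leq v$ and $\s_1 \leq u+v$: the former holds because every weight of $\bw^2 V_1$ has first coordinate at most $1$, so all weights occurring in $(\bw^2 V_1)^{\oo v} \supseteq \bb{S}_\mu(\bw^2 V_1)$ have first coordinate at most $v$; the latter holds analogously from the inclusion $\bb{S}_{\ll'}(\bw^2 V_2)\oo\bb{S}_{\mu'}(V_2) \subseteq (\bw^2 V_2)^{\oo u}\oo V_2^{\oo v}$.

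Putting these together: for $a\geq 1$, we get $v \geq \nu_1 \geq \nu_a \geq 2r+a+1 \geq a+3$, and for $b\geq 1$, $u+v \geq \s_1 \geq \s_b \geq v+2r+b+1$, hence $u \geq 2r+b+1 \geq b+3$. If $H^j$ were nonzero, some pair $(a,b)$ with $a+b=j\geq 1$ would have both factors nonzero, forcing $u+v\geq j+3$ in each case $a\geq 1, b=0$ or $a=0, b\geq 1$, and $u+v\geq j+6$ if both $a,b\geq 1$; this contradicts $u+v\leq j+2$. The main obstacle is the correct reading of Bott's theorem (specifically that $H^a\neq 0$ with $a\geq 1$ requires $\nu_a > 2r+a$) combined with the two weight bounds; once these are established the case analysis is immediate.
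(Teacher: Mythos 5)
Your proof is correct, and it follows the paper's overall strategy (Cauchy's formula, the K\"unneth decomposition, then Bott's theorem on each factor), but the final combinatorial step is handled differently — and more cleanly. The paper bounds the \emph{sizes} of the partitions, $|\ll|=2v$ and $|\mu|=2u+v$, and combines these with $\ll_a\geq a\cdot\ll_a$-type inequalities to obtain quadratic constraints like $2v\geq a(a+3)$; this forces a three-way case split on $(a,b)$, and the case $a=b=1$ ultimately requires an extra observation about the plethysm $\bb{S}_\b\circ\bb{S}_{1,1}$ having no one-row components. You instead bound the \emph{first part} of the partition directly ($\nu_1\leq v$ and $\s_1\leq u+v$, both coming from the fact that every weight of $\bw^2 V$ has coordinates at most $1$), and pair this with the monotonicity $\nu_1\geq\nu_a$. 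This yields the linear inequalities $v\geq a+3$ (for $a\geq1$) and $u\geq b+3$ (for $b\geq1$), from which the contradiction with $u+v\leq j+2$ is immediate in every case $a+b=j\geq 1$, with no need for the plethysm argument or the equality analysis. Both proofs are valid; yours saves some work at the end.
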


\begin{proof}
 Using Cauchy's formula, we have that
 \[\bw^u\xi_1 = \bigoplus_{\a\vdash u}\bb{S}_{\a}\left(\bb{S}_{1,1}V_1\right) \oo \pi_2^*\left(\bb{S}_{\a'}\left(\bb{S}_{1,1}\mc{R}_2\right)\right),\quad \bw^v\xi_2 = \bigoplus_{\b\vdash v}\pi_1^*\left(\bb{S}_{\b}\left(\bb{S}_{1,1}\mc{R}_1\right)\right) \oo \pi_2^*\left(\bb{S}_{\b'}\left(\mc{R}_2\oo \mc{Q}_2\right)\right).\]

 Using the projection formula and K\"unneth's formula, it suffices to check that if the following hold
 \begin{itemize}
 \item $\ll\vdash 2v$ is such that $\bb{S}_{\ll}$ appears in the plethysm $\bb{S}_{\b}\circ\bb{S}_{1,1}$ for some $\b\vdash v$,
 \item $\mu\vdash (2u+v)$ is such that $\bb{S}_{\mu}$ appears in the tensor product $(\bb{S}_{\a'}\circ\bb{S}_{1,1})\oo \bb{S}_{\b'}$ for $\a\vdash u$, $\b\vdash v$,
 \item $a,b\in\bb{Z}_{\geq 0}$ satisfy $a+b=j$,
 \end{itemize}
 then $H^a(\bb{P}V_1,\mc{Q}_1^{2r}\oo\bb{S}_{\ll}\mc{R}_1) = 0$ or $H^b(\bb{P}V_2,\mc{Q}_2^{2r+v}\oo\bb{S}_{\mu}\mc{R}_2) = 0$. Suppose otherwise, so that both groups are non-zero. It follows from Bott's theorem that $\ll_a-1\geq 2r+a\geq 2+a$ and $\mu_b-1\geq 2r+v+b\geq 2+v+b$, so
 \begin{equation}\label{eq:ineq-2v}
 2v = |\ll| \geq \ll_1+\cdots+\ll_a \geq a\cdot\ll_a \geq a(3+a),
 \end{equation}
 \begin{equation}\label{eq:ineq-2u+v}
2u+v = |\mu| \geq \mu_1+\cdots+\mu_b \geq b\cdot\mu_b \geq b(3+b+v).
\end{equation}
We divide our analysis into three cases and show that in each case we obtain a contradiction.
 
\noindent\emph{Case 1: $b=0$.} It follows that $a=j$, so that $j+2\geq u+v\geq v\geq j(j+3)/2$ by (\ref{eq:ineq-2v}), which is only possible when $j=1$, which then forces $3\geq v\geq 2$. Since $\bb{S}_{\b}\circ\bb{S}_{1,1}$ is a quotient of $\bb{S}_{1,1}^{\oo v}$, it follows by Pieri's rule that $\ll_1\leq v\leq 3$, but this contradicts the inequality $\ll_a-1\geq 2+a$ since $a=j=1$.
 
 \noindent\emph{Case 2: $a=0$.} It follows that $b=j$, so that
 \[ 2(j+2)\geq 2(u+v) = (2u+v)+v \overset{(\ref{eq:ineq-2u+v})}{\geq} j(3+j+v) + v=3j+j^2+(j+1)v.\] 
 It follows that $4\geq j+j^2+(j+1)v$, which implies $j=1$, so that $u+v\leq 3$. Since $\bb{S}_{\a'}\circ\bb{S}_{1,1}$ is a quotient of $\bb{S}_{1,1}^{\oo u}$ and $\bb{S}_{\b'}$ is a quotient of $\bb{S}_{1,1}^{\oo v}$, it follows from Pieri's rule that $\mu_1\leq u+v\leq 3$, but this contradicts the inequality $\mu_b\geq b(3+b+v)\geq b(3+b)$ since $b=j=1$.

\noindent\emph{Case 3: $a,b\geq 1$.} It follows that
 \[2(a+b)+4=2(j+2)\geq 2v+(2u+v)-v \overset{(\ref{eq:ineq-2v})-(\ref{eq:ineq-2u+v})}{\geq} a(3+a)+b(3+b+v) - v=3(a+b)+a^2+b^2+(b-1)v,\]
 which implies that $4\geq a+b+a^2+b^2$, forcing $a=b=1$ and equality to hold everywhere. In particular, $2v=a(3+a)$ implies $v=2$, and $2u+v=b(3+b+v)$ implies $u=2$. Since (\ref{eq:ineq-2v})--(\ref{eq:ineq-2u+v}) are equalities, we get moreover that $\ll=(4)$ and $\mu=(6)$ are partitions with only one part. However, every $\ll$ for which $\bb{S}_{\ll}$ appears in $\bb{S}_{\b}\circ\bb{S}_{1,1}$ has at least two parts, so we reached once again a contradiction.
\end{proof}

\begin{proposition}\label{prop:geometric-Mr-Mr-1}
If $r>0$ then $M_r/M_{r-1} = H^0(X,\mc{L}^{2r}\oo\Sym(\eta))$, and $H^i(X,\mc{L}^{2r}\oo\Sym(\eta))=0$ for $i>0$.
 Moreover, for $i=0,1,2$ we have that $\Tor_i^R(M_r/M_{r-1},\bb{C})_j = 0\mbox{ if }j\neq i+r$, so Theorem~\ref{thm:Tor-grM} holds.
\end{proposition}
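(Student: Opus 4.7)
My plan is to compute the cohomology via Cauchy/Bott and then invoke the Kempf--Weyman geometric technique. Using Cauchy's formula for the external tensor product $\eta=\pi_1^*(\mc{R}_1\oo\mc{Q}_1)\oo\pi_2^*(\mc{R}_2\oo\mc{Q}_2)$ together with $\SS_{\ll}(\mc{R}_i\oo\mc{Q}_i)=\SS_{\ll}\mc{R}_i\oo\mc{Q}_i^{|\ll|}$ (as $\mc{Q}_i$ is a line bundle), each symmetric power decomposes as
\[\mc{L}^{2r}\oo\Sym^d\eta=\bigoplus_{\ll\vdash d}\pi_1^*\!\left(\SS_{\ll}\mc{R}_1\oo\mc{Q}_1^{2r+d}\right)\oo\pi_2^*\!\left(\SS_{\ll}\mc{R}_2\oo\mc{Q}_2^{2r+d}\right).\]
Since $r\geq 1$ forces $2r+d\geq d\geq\ll_1$, the weight sequence $(2r+d,\ll_1,\ll_2,\ldots)$ is already a partition for every $\ll$ with at most $\min(m,n)-1$ parts. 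Bott's theorem combined with the K\"unneth formula then gives that only $H^0$ survives, contributing $\SS_{(2r+d,\ll)}V_1\oo\SS_{(2r+d,\ll)}V_2$. Summing over $d\geq 0$ and setting $\mu=(2r+d,\ll)$ parametrizes precisely the partitions satisfying $2\mu_1-|\mu|=2r$, i.e., those in $\mc{M}_r\setminus\mc{M}_{r-1}$; by Lemma~\ref{lem:dec-Mr-Mr-1} this identifies $H^0(X,\mc{L}^{2r}\oo\Sym(\eta))$ with $M_r/M_{r-1}$ as a $\GL$-representation, proving the first two claims.

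For the Tor vanishing, I would apply the Kempf--Weyman technique \cite{weyman} to the Koszul resolution $\bw^{\bullet}\xi\oo R\twoheadrightarrow\Sym(\eta)$ on $X$ (exact because $\xi$ is a subbundle of the trivial bundle $W\oo\mc{O}_X$). After twisting by $\mc{L}^{2r}$ and taking hypercohomology, the concentration of $H^{\bullet}(X,\mc{L}^{2r}\oo\Sym(\eta))$ in degree $0$ established above yields a (possibly non-minimal) free $R$-resolution $F_{\bullet}$ of $M_r/M_{r-1}$ with
\[F_k=\bigoplus_{\ell\geq 0} H^{\ell}\!\left(X,\mc{L}^{2r}\oo\bw^{k+\ell}\xi\right)\oo_{\bb{C}} R(-r-k-\ell),\]
where the shift by $r$ arises because the sections of $\mc{L}^{2r}$ sit in $R$-degree $r$ inside $S^{(2)}$. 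Consequently, $\Tor_k^R(M_r/M_{r-1},\bb{C})_{r+k+\ell}$ is a subquotient of $H^{\ell}(X,\mc{L}^{2r}\oo\bw^{k+\ell}\xi)$, and it suffices to show this group vanishes for every $k\in\{0,1,2\}$ and $\ell\geq 1$.

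To conclude, I would exploit the extension $0\to\xi_1\to\xi\to\xi_2\to 0$, which induces a filtration on $\bw^{k+\ell}\xi$ whose associated graded pieces are the bundles $\bw^{u}\xi_1\oo\bw^{v}\xi_2$ with $u+v=k+\ell$. Since $u+v=k+\ell\leq\ell+2$, Lemma~\ref{lem:vanishing-xi1-xi2} applied with $j=\ell\geq 1$ yields $H^{\ell}(X,\mc{L}^{2r}\oo\bw^{u}\xi_1\oo\bw^{v}\xi_2)=0$ for every such piece; by repeated use of the long exact sequence in cohomology the vanishing propagates to $\bw^{k+\ell}\xi$ itself, thereby proving Theorem~\ref{thm:Tor-grM}. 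The main obstacle I anticipate is bookkeeping the degree and homological conventions in the Kempf--Weyman complex, in particular verifying that the twist $\mc{L}^{2r}$ produces precisely the internal shift by $r$ that matches the generating degree of $M_r/M_{r-1}$; once this is set up, the proof reduces mechanically to the combinatorial vanishing packaged by Lemma~\ref{lem:vanishing-xi1-xi2}.
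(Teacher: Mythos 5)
Your geometric computation follows the paper's proof closely: the same Cauchy/Bott expansion of $\mc{L}^{2r}\oo\Sym^d\eta$, the same Kempf--Weyman complex, and the same filtration of $\bw^{k+\ell}\xi$ by the pieces $\bw^u\xi_1\oo\bw^v\xi_2$ with $u+v\leq\ell+2$, fed into Lemma~\ref{lem:vanishing-xi1-xi2}. However, there is a genuine gap at the hinge of the argument. The Kempf--Weyman technique produces a free $R$-resolution of $N_r := H^0(X,\mc{L}^{2r}\oo\Sym(\eta))$, whose $R$-module structure comes from the surjection $W\oo\mc{O}_X\onto\eta$. You established that $N_r$ and $M_r/M_{r-1}$ have the same $\GL$-character, and then wrote down ``a free $R$-resolution $F_\bullet$ of $M_r/M_{r-1}$.'' But a $\GL$-isomorphism of graded representations does not by itself give an isomorphism of $R$-modules, and the $\Tor$ groups depend on the $R$-module structure; two $\GL$-equivariant submodules of $\Sym^{2r}V_1\oo\Sym^{2r}V_2\oo R$ can have $\GL$-isomorphic quotients that are not isomorphic as $R$-modules. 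Your argument therefore establishes the desired $\Tor$-vanishing for $N_r$, but not yet for $M_r/M_{r-1}$.

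To close the gap, you need an $R$-linear identification $N_r\cong M_r/M_{r-1}$. The paper supplies it as follows: from the vanishing you proved, $\Tor_0^R(N_r,\bb{C})=\Sym^{2r}V_1\oo\Sym^{2r}V_2$ and, computing $H^0(X,\mc{L}^{2r}\oo\xi)$ via the extension $0\to\xi_1\to\xi\to\xi_2\to 0$,
\[\Tor_1^R(N_r,\bb{C})\subseteq\bb{S}_{2r,1,1}V_1\oo\bb{S}_{2r,1,1}V_2\oplus\bb{S}_{2r,1,1}V_1\oo\bb{S}_{2r+1,1}V_2\oplus\bb{S}_{2r+1,1}V_1\oo\bb{S}_{2r,1,1}V_2.\]
Both $N_r$ and $M_r/M_{r-1}$ are cyclic quotients of $F=\Sym^{2r}V_1\oo\Sym^{2r}V_2\oo R$, and each constituent of $\Tor_1^R(N_r,\bb{C})$ lies in $\ker\bigl(F\twoheadrightarrow M_r/M_{r-1}\bigr)$, being either unbalanced or indexed by $(2r,1,1)\in\mc{M}_{r-1}$. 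Hence $\ker(F\onto N_r)\subseteq\ker(F\onto M_r/M_{r-1})$, giving an $R$-linear surjection $N_r\onto M_r/M_{r-1}$ which is an isomorphism by the character count. Only after this step does your Kempf--Weyman resolution compute $\Tor^R_\bullet(M_r/M_{r-1},\bb{C})$, completing the proof.
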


\begin{proof}
Denote $N_r = H^0(X,\mc{L}^{2r}\oo\Sym(\eta))$.
 Note that since $\mc{Q}_i$ is a line bundle, we have that for a partition $\mu\vdash d$ one has
 \[\bb{S}_{\mu}(\mc{R}_i\oo\mc{Q}_i) = \mc{Q}_i^d \oo \bb{S}_{\mu}\mc{R}_i.\]
 By Cauchy's formula, one gets
 \[\mc{L}^{2r}\oo\Sym^d(\eta) = \bigoplus_{\mu\vdash d}\pi_1^*(\mc{Q}_1^{d+2r} \oo \bb{S}_{\mu}\mc{R}_1) \oo \pi_2^*(\mc{Q}_2^{d+2r} \oo \bb{S}_{\mu}\mc{R}_2).\]
Using K\"unneth's formula, we get that
\[ H^t(X,\mc{L}^{2r}\oo\Sym^d(\eta)) = \bigoplus_{\mu\vdash d}\left(\bigoplus_{u+v=t} H^u(\bb{P}V_1,\mc{Q}_1^{d+2r} \oo \bb{S}_{\mu}\mc{R}_1) \oo H^v(\bb{P}V_2,\mc{Q}_2^{d+2r} \oo \bb{S}_{\mu}\mc{R}_2)\right).\]
Since $|\mu|=d$ and $r\geq 0$, it follows that $d+2r\geq\mu_1$, so Bott's theorem implies that the sheaves $\mc{Q}_i^{d+2r} \oo \bb{S}_{\mu}\mc{R}_i$ have vanishing higher cohomology. It follows that $\mc{L}^{2r}\oo\Sym^d(\eta)$ has vanishing higher cohomology, and 
\begin{align*}
H^0(X,\mc{L}^{2r}\oo\Sym^d(\eta)) =& \bigoplus_{\mu\vdash d} H^0(\bb{P}V_1,\mc{Q}_1^{d+2r} \oo \bb{S}_{\mu}\mc{R}_1) \oo H^0(\bb{P}V_2,\mc{Q}_2^{d+2r} \oo \bb{S}_{\mu}\mc{R}_2) \\
\overset{\text{Bott}}{=}& \bigoplus_{\mu\vdash d} \bb{S}_{d+2r,\mu_1,\mu_2,\cdots}V_1 \oo \bb{S}_{d+2r,\mu_1,\mu_2,\cdots}V_2.
\end{align*}
Since the partitions in $\mc{M}_r\setminus\mc{M}_{r-1}$ are precisely the ones of the form $(d+2r,\mu_1,\mu_2,\cdots)$ with $|\mu|=d$ for some $d\geq 0$, we can then apply Lemma~\ref{lem:dec-Mr-Mr-1} to conclude that $M_r/M_{r-1} \cong  N_r$ as $\GL$-representations.

Using \cite[Theorem~5.1.2]{weyman} it follows  that
\begin{equation}\label{eq:Tori-Mr-Mr-1}
\Tor_i^R(N_r,\bb{C})_{r+i+j} = H^j\left(X,\bw^{i+j}\xi\oo\mc{L}^{2r}\right)\mbox{ for all }i,j\in\bb{Z}.
\end{equation}
For $i\leq 2$, it follows from \cite[Exercise~II.5.16]{hartshorne} that $\bw^{i+j}\xi$ has a filtration with composition factors $\bw^u\xi_1\oo\bw^v\xi_2$, with $u+v=i+j\leq j+2$, so using Lemma~\ref{lem:vanishing-xi1-xi2} we get that $H^j\left(X,\bw^{i+j}\xi\oo\mc{L}^{2r}\right)=0$ when $j,r\geq 1$. 
In particular,
 from (\ref{eq:Tori-Mr-Mr-1}) we get 
 $ \Tor_i^R(N_r,\bb{C})_j = 0\mbox{ if }j\neq i+r$
 for $i=0,1,2$.

To conclude our proof we show that $M_r/M_{r-1} \cong  N_r$ as $R$-modules. By the previous paragraph we have
\[
\Tor_0^R(N_r, \bb{C}) = H^0(X, \mc{L}^{2r}) = \Sym^{2r} V_1 \oo \Sym^{2r} V_2
\mbox{ and }
\Tor_1^R(N_r, \bb{C}) =  H^0(X, \mc{L}^{2r}\oo \xi).
\]
To compute the latter, notice that the exact sequence $0 \rightarrow \xi_1 \rightarrow \xi \rightarrow \xi_2 \rightarrow 0$ yields an exact sequence
$$
0 \lra H^0\left(X, \mc{L}^{2r} \oo \xi_1\right) \lra
H^0\left(X, \mc{L}^{2r} \oo \xi\right) \lra
H^0\left(X, \mc{L}^{2r} \oo \xi_2\right).
$$
Using K\"unneth's formula, Bott's theorem, and Pieri's rule we calculate
\begin{align*}
H^0\left(X, \mc{L}^{2r} \oo \xi_1\right)  = &
H^0\left(X,
 \pi_1^* \mc{Q}_1^{2r}\oo
 \pi_2^* \mc{Q}_2^{2r}\oo
 \bigwedge^2 V_1 \oo \pi^*_2  \left(\bigwedge^2 \mc{R}_2\right)\right)
 \\
= &
 \bigwedge^2 V_1 \oo
H^0\left(X,
 \pi_1^* \mc{Q}_1^{2r}\oo
 \pi^*_2  \left(\mc{Q}_2^{2r}\oo\bigwedge^2 \mc{R}_2\right)\right)
\\
\overset{\text{K\"unneth}}{=} &
 \bigwedge^2 V_1 \oo
H^0\left(\bb{P} V_1, 
 \mc{Q}_1^{2r}\right)
 \oo 
H^0\left(\bb{P} V_2, 
 \mc{Q}_2^{2r}\oo
  \bigwedge^2 \mc{R}_2\right)
\\
 \overset{\text{Bott}}{=} &
\left( \bigwedge^2 V_1 \oo
\Sym^{2r} V_1 \right)\oo 
\bb{S}_{2r,1,1} V_2
\\
\overset{\text{Pieri}}{=} &
\left(
\bb{S}_{2r,1,1} V_1
\oplus
\bb{S}_{2r+1,1} V_1
\right)
\oo 
\bb{S}_{2r,1,1} V_2,
\\
H^0\left(X, \mc{L}^{2r} \oo \xi_2\right)  =&
 H^0\left(X,
 \pi_1^* \mc{Q}_1^{2r}\oo
 \pi_2^* \mc{Q}_2^{2r}\oo
\pi^*_1  \left(\bigwedge^2 \mc{R}_1\right) \oo \pi^*_2  (\mc{R}_2\oo \mc{Q}_2)\right)
 \\
 = &
H^0\left(X, 
\pi^*_1  \left(\mc{Q}_1^{2r}\oo\bigwedge^2 \mc{R}_1\right) \oo \pi^*_2  (\mc{Q}_2^{2r}\oo\mc{R}_2\oo \mc{Q}_2)\right)
 \\
 \overset{\text{K\"unneth}}{=}  &
H^0\left(\bb{P} V_1, 
 \mc{Q}_1^{2r} \oo \bigwedge^2 \mc{R}_1
  \right)
 \oo 
H^0\left(\bb{P} V_2, 
 \mc{Q}_2^{2r+1}\oo
 \mc{R}_2\right)\\
\overset{\text{Bott}}{=}  & 
\bb{S}_{2r,1,1} V_1
\oo 
\bb{S}_{2r+1,1} V_2.
\end{align*}
We deduce that
$\Tor_1^R(N_r, \bb{C})$ is a subrepresentation of 
\begin{equation}\label{eq:Tor1-Nr}
\bb{S}_{2r,1,1} V_1 \oo \bb{S}_{2r,1,1} V_2 
\oplus
\bb{S}_{2r,1,1} V_1 \oo \bb{S}_{2r+1,1} V_2
\oplus
\bb{S}_{2r+1,1} V_1 \oo \bb{S}_{2r,1,1} V_2.
\end{equation}
Since both $M_r/M_{r-1}$ and $N_r$ are generated by $\Sym^{2r}V_1\oo \Sym^{2r}V_1$,
there exist  $R$-linear surjections
\[
\varphi_M : F \twoheadrightarrow M_r/M_{r-1},
\qquad
\varphi_N : F \twoheadrightarrow N_r
\]
where $F = \Sym^{2r}V_1\oo \Sym^{2r}V_1 \oo R$.
Let $H\subseteq F$ be the $R$-submodule generated by all subspaces
$\bb{S}_\ll V_1 \oo \bb{S}_\mu V_2$ such that either $\ll \ne \mu$ or $\ll = \mu \notin \mc{M}_r\setminus\mc{M}_{r-1}$.
It follows by Lemma \ref{lem:dec-Mr-Mr-1} that $H\subseteq \ker(\varphi_M)$.
On the other hand, by (\ref{eq:Tor1-Nr}) we see that  $\ker(\varphi_{N}) \subseteq H$,
since $(2r,1,1) \in \mc{M}_{r-1}$.
Hence $\varphi_M, \varphi_N$ induce an $R$-linear surjection  $ N_r \twoheadrightarrow M_r/M_{r-1}$,
which must also be injective since 
$N_r \cong M_r/M_{r-1}$ as  $\GL$-representations.
\end{proof}

\section{The subspace variety}\label{sec:subspace}

Recall that $\ol{W} = \Sym^2 V_1\oo \Sym^2 V_2$, and $\ol{R} = \Sym(\ol{W})$, so that $\op{Spec}(\ol{R}) = \ol{W}^{\vee}$. We consider the \defi{subspace variety $Y\subset\ol{W}^{\vee}$} given by
\[ Y = \{ y \in \ol{W}^{\vee} : y\in \Sym^2 H \oo \Sym^2 V_2^{\vee}\mbox{ for some codimension one subspace }H\subset V_1^{\vee}\}.\] 
The goal of this section is to describe the defining equations of $Y$.

\begin{theorem}\label{thm:subspace}
 The vector space of minimal generators of the ideal $I(Y)\subset\ol{R}$ is concentrated in degree $m$, and it is isomorphic as a $\GL$-representation to
 \[\bw^m V_1 \oo \bw^m(V_1\oo\Sym^2 V_2).\]
\end{theorem}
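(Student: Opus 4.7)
The strategy is to realize $Y$ as the image of a natural desingularization and apply the Kempf--Weyman geometric technique. Let $G = \bb{P}V_1^\vee$ parameterize codimension-one subspaces $H \subset V_1^\vee$ via the tautological sequence $0 \to \mc{R}_1 \to V_1^\vee \oo \mc{O}_G \to \mc{Q}_1 \to 0$, with $\mc{R}_1$ the universal such subspace (rank $m-1$). Consider the subbundle $\mc{E} = \Sym^2 \mc{R}_1 \oo \Sym^2 V_2^\vee$ of the trivial bundle $\ol{W}^\vee \oo \mc{O}_G$, with total space $\tilde{Y}$. The projection $\pi: \tilde{Y} \to \ol{W}^\vee$ has image exactly $Y$, and I would first verify that $\pi$ is birational and that $Y$ is normal with rational singularities: viewing $y \in \ol{W}^\vee$ as a linear map $\mu_y: V_1 \oo \Sym^2 V_2 \to V_1^\vee$ obtained by natural contraction, one checks that $y \in \Sym^2 H \oo \Sym^2 V_2^\vee$ iff $\op{Im}(\mu_y) \subseteq H$. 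Hence $Y = \{y : \op{rank}(\mu_y) \le m-1\}$ is a ``symmetric'' determinantal variety, and for generic $y \in Y$ the hyperplane $H = \op{Im}(\mu_y)$ is uniquely recovered from $y$, so $\pi$ is birational. Rational singularities follow from Kempf's collapsing theorem applied to the $\GL$-equivariant bundle $\mc{E}$ on the homogeneous space $G$.

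By Weyman's Theorem~5.1.2 the minimal free resolution of $\mc{O}_Y$ over $\ol{R}$ has terms
\[ F_i = \bigoplus_{j} H^j(G, \bw^{i+j} \eta^\vee) \oo \ol{R}(-i-j), \]
where $\eta = (\ol{W}^\vee \oo \mc{O}_G)/\mc{E} = \mc{S}' \oo \Sym^2 V_2^\vee$ with $\mc{S}' = \Sym^2 V_1^\vee/\Sym^2 \mc{R}_1$ (rank $m$, fitting in $0 \to \mc{R}_1 \oo \mc{Q}_1 \to \mc{S}' \to \mc{Q}_1^{\oo 2} \to 0$). Since the minimal generators of $I(Y)$ correspond to $F_1$, the theorem reduces to showing (i) $H^j(G, \bw^{1+j}\eta^\vee) = 0$ for $j < m-1$, and (ii) $H^{m-1}(G, \bw^m \eta^\vee) \cong \bw^m V_1 \oo \bw^m(V_1 \oo \Sym^2 V_2)$ as a $\GL$-representation.

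The computation proceeds by expanding $\bw^k \eta^\vee = \bigoplus_{|\ll|=k} \bb{S}_\ll(\mc{S}')^\vee \oo \bb{S}_{\ll'}(\Sym^2 V_2)$ via Cauchy's formula and further filtering each $\bb{S}_\ll(\mc{S}')^\vee$ via the dual tautological sequence. The graded pieces take the form $\bb{S}_\nu \mc{R}_1^\vee \oo \mc{Q}_1^{-(m+j)}$, indexed by pairs $(\nu,j)$ with $\ll/\nu$ a horizontal strip of size $j$ and $|\nu| + j = |\ll|$. Bott's theorem on $G = \bb{P}V_1^\vee$ determines the cohomology of each piece as an explicit $\GL(V_1)$-representation. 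Collecting contributions across all $\ll$ via Pieri's rule, and invoking the identity $\sum_{\ll/\nu \in \op{HS}_j} \bb{S}_{\ll'}(\Sym^2 V_2) = \bb{S}_{\nu'}(\Sym^2 V_2) \oo \bw^j(\Sym^2 V_2)$, the terms reassemble via a Cauchy-type identity into $\bw^m V_1 \oo \bw^m(V_1 \oo \Sym^2 V_2)$ in degree $m$, with all lower-degree contributions annihilating.

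The hard part will be tracking the spectral sequence for the filtration of $\bb{S}_\ll(\mc{S}')^\vee$: several graded pieces contribute isomorphic $\GL(V_1)$-representations in adjacent cohomological degrees, and these must be shown to cancel (via non-vanishing of the corresponding Weyman differentials) so that only the top cohomology $H^{m-1}$ survives with the claimed structure. This bookkeeping is managed by a piece-by-piece Bott analysis, tracking horizontal strip contributions, and cross-checked by matching dimensions with the span of the $m \times m$ minors of $\mu_y$, which form a natural $\GL$-subrepresentation of $I(Y)_m$ isomorphic to $\bw^m V_1 \oo \bw^m(V_1 \oo \Sym^2 V_2)$.
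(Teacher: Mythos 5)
Your overall framework is the same as the paper's: realize $Y$ as the image of a geometric vector bundle over $\bb{P}^{m-1}$ and apply the Kempf--Weyman technique (Weyman's Theorem~5.1.2). The observation that $Y=\{y:\op{rank}(\mu_y)\leq m-1\}$ via the contraction $\mu_y$, and the ensuing birationality argument, are sound and a nice complement to the paper, which simply asserts that the reader can check birationality.

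However, your proposed cohomology computation is much more complicated than necessary and, as written, has a genuine gap. The decisive simplification you miss is that the co-sheaf $\eta^\vee$ in the Kempf--Weyman resolution is a \emph{line bundle twist of a trivial bundle}, so no Cauchy expansion, no filtration of $\bb{S}_\ll(\mc{S}')^\vee$, and no spectral-sequence bookkeeping is needed. Indeed, since $\mc{Q}_1$ has rank one, the kernel of $\Sym^2 V_1\oo\mc{O}_G\to\Sym^2(\mc{R}_1^\vee)$ is precisely the image of $\mc{Q}_1^\vee\oo V_1\to\Sym^2 V_1\oo\mc{O}_G$, and this map is injective because $\mc{Q}_1^\vee$ is a line bundle. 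Hence
\[
(\mc{S}')^\vee\;\cong\;\mc{Q}_1^\vee\oo V_1\;=\;\mc{O}_G(-1)\oo V_1,\qquad
\eta^\vee\;\cong\;\mc{O}_G(-1)\oo\bigl(V_1\oo\Sym^2 V_2\bigr),
\]
so that
\[
\bw^{k}\eta^\vee\;\cong\;\mc{O}_G(-k)\oo\bw^{k}\bigl(V_1\oo\Sym^2 V_2\bigr).
\]
Now $H^j(\bb{P}^{m-1},\mc{O}(-k))$ is nonzero only when $(j,k)=(0,0)$ or when $j=m-1$ and $k\geq m$, and $H^{m-1}(\bb{P}^{m-1},\mc{O}(-m))\cong\bw^m V_1$ (one-dimensional, with the stated $\GL(V_1)$-structure). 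This immediately gives $F_0=\ol{R}$ and $F_1=\bw^m V_1\oo\bw^m(V_1\oo\Sym^2 V_2)\oo\ol{R}(-m)$, with nothing else to cancel.

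By contrast, you propose to filter $\mc{S}'$ by the two-step extension $0\to\mc{R}_1\oo\mc{Q}_1\to\mc{S}'\to\mc{Q}_1^{\oo 2}\to 0$, expand $\bw^k\eta^\vee$ via Cauchy, further filter each $\bb{S}_\ll(\mc{S}')^\vee$, apply Bott for general Schur functors, and then argue that ``several graded pieces contribute isomorphic $\GL(V_1)$-representations in adjacent cohomological degrees, and these must be shown to cancel (via non-vanishing of the corresponding Weyman differentials).'' That last step is not established in your proposal, and it is not trivial: proving non-vanishing of specific differentials in a hypercohomology spectral sequence is precisely the kind of argument that tends to be difficult and error-prone. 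As written, the proof has a hole. The cure is simply to notice the identification $(\mc{S}')^\vee\cong\mc{O}_G(-1)\oo V_1$ before taking exterior powers, at which point the filtration and the would-be cancellations disappear and the computation is a one-liner via classical line-bundle cohomology on projective space. This is exactly the route the paper takes.
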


The proof techniques are similar to those used in \cite[Chapter~7]{weyman}. We consider the projective space $\PP$ of  rank $(m-1)$ quotients of $V_1$, and consider the affine bundle
\[\mc{Y} = \ul{\op{Spec}}_{\PP}\Sym(\Sym^2\mc{Q}\oo \Sym^2 V_2),\]
where $\mc{Q}$ is the tautological quotient sheaf of rank $m-1$ (note that this is different from (\ref{eq:tautological-ses}), where the rank of $\mc{Q}_1$ was one). The tautological sequence on $\PP$ is
\[0\lra\mc{R} \lra V_1 \oo \mc{O}_{\PP} \lra \mc{Q} \lra 0,\]
where $\mc{R}$ is often denoted $\mc{O}_{\PP}(-1)$. It induces an exact sequence
\[0\lra\mc{R}\oo V_1 \lra \Sym^2V_1 \oo \mc{O}_{\PP} \lra \Sym^2\mc{Q} \lra 0,\]
and after tensoring with $\Sym^2 V_2$ it gives an exact sequence
\[0\lra(\mc{R}\oo V_1)\oo \Sym^2 V_2\lra \ol{W} \oo \mc{O}_{\PP} \lra \Sym^2\mc{Q} \oo \Sym^2 V_2 \lra 0,\]
which makes $\mc{Y}$ a geometric sub-bundle of the trivial bundle $\ol{W}^{\vee} \times \PP$. Writing $q:\ol{W}^{\vee} \times \PP \lra \ol{W}^{\vee}$ for the natural projection map, we have that $q(\mc{Y}) = Y$ (in fact, the reader can check that $q_{|_{\mc{Y}}}$ is birational, so it gives a resolution of singularities of $Y$). Using Bott's theorem and the projection formula, we have that
\[ H^i(\mc{Y},\mc{O}_{\mc{Y}}) = \bigoplus_{d\geq 0}H^i(\PP,\Sym^d(\Sym^2\mc{Q} \oo \Sym^2 V_2))=0\mbox{ for }i>0.\]
It follows from \cite[Theorem~5.1.2]{weyman} that $q_*(\mc{O}_{\mc{Y}})$ has a minimal free $\ol{R}$-resolution $F_{\bullet}$ where
\[ 
\begin{aligned}
F_i &= \bigoplus_j H^j\left(\PP,\bw^{i+j}(\mc{R}\oo V_1\oo \Sym^2 V_2)\right)\oo\ol{R}(-i-j) \\
&=\bigoplus_j H^j(\PP,\mc{R}^{i+j})\oo\bw^{i+j}(V_1\oo \Sym^2 V_2) \oo\ol{R}(-i-j).\\
\end{aligned}
\]
For $i+j\geq 0$, we have that $H^j(\PP,\mc{R}^{i+j})=H^j(\PP,\mc{O}_{\PP}(-i-j))$ is non-zero only for $i=j=0$, and $j=m-1$, $i+j\geq m$. In particular, we get that
\[ F_1 = H^{m-1}(\PP,\mc{R}^m) \oo \bw^{m}(V_1\oo \Sym^2 V_2) \oo \ol{R}(-m).\]
Since the minimal generators of $F_1$ are the minimal generators of $I(Y)$, and since $H^{m-1}(\PP,\mc{R}^m)=\bw^m V_1$, the conclusion of Theorem~\ref{thm:subspace} follows.

\section{The proof of Theorem~\ref{thm:relations-ol-A}}\label{sec:induction}

The goal of this section is to describe the proof of Theorem~\ref{thm:relations-ol-A}. We let $(m,n)=(\dim(V_1),\dim(V_2))$, and note that by symmetry, the cases $(m,n)$ and $(n,m)$ are equivalent, so without loss of generality we may assume that $m\geq n$. In Section~\ref{subsec:low-degree} we use the results of \cite{BCV} to prove Theorem~\ref{thm:relations-ol-A} for $j\leq 4$. Based on Corollary~\ref{cor:Tor1-low-degree}, we deduce that the theorem is true for $n\leq 5$ in Section~\ref{subsec:n-leq-5}. The substantial part of the argument is explained in Section~\ref{subsec:inductive-step}, where we argue by induction on the dimension vector $(m,n)$.

\subsection{Low degree equations}\label{subsec:low-degree}
It follows from transpose duality that proving Theorem~\ref{thm:relations-ol-A} for $j\leq 4$ (for all $m,n$) is equivalent to proving Theorem~\ref{thm:relations-A} for $j\leq 4$ (for all $m,n$). Since the minors (resp. permanents) are linearly independent, we may assume that $j\geq 2$. The cases $j=2,3$ and $4$ are discussed in Sections~2.1, 3.3, and 3.4 (respectively) of \cite{BCV}.

\subsection{The case $n\leq 5$}\label{subsec:n-leq-5}
The assumption $n\leq 5$ implies that $\lfloor n/2\rfloor+2\leq 4$, so we conclude by Corollary~\ref{cor:Tor1-low-degree} that $\Tor_1^{\ol{R}}(\ol{A},\bb{C})_j = 0$ for $j>4$.

\subsection{The inductive step}\label{subsec:inductive-step}
We assume that $m\geq n\geq 6$, and suppose by induction that Theorem~\ref{thm:relations-ol-A} is true for every pair $(m',n')\neq(m,n)$, with $m'\leq m$ and $n'\leq n$ (we abbreviate this as $(m',n')\prec(m,n)$). If we define the functor $\ol{\mf{T}}=\ol{\mf{T}}_2 \oplus \ol{\mf{T}}_3$ by letting (recall the notation and conventions in Section~\ref{subsec:bi-functors})
\[\ol{\mf{T}}_2 = \bb{S}_{4}\boxtimes\bb{S}_{2,2} \oplus \bb{S}_{2,2}\boxtimes \bb{S}_{4},\quad \ol{\mf{T}}_3 = \bb{S}_{4,1,1}\boxtimes\bb{S}_{3,3} \oplus \bb{S}_{3,3}\boxtimes \bb{S}_{4,1,1},\]
then the induction hypothesis gives for every $U_1,U_2$ with $(\dim(U_1),\dim(U_2))\prec(m,n)$ an exact sequence
\[  \ol{\mf{T}}(U_1,U_2) \oo \ol{\mf{R}}(U_1,U_2) \lra \ol{\mf{R}}(U_1,U_2) \lra \ol{\mf{A}}(U_1,U_2) \lra 0.\]
By functoriality, we can extend $\ol{\mf{T}},\ol{\mf{R}},\ol{\mf{A}}$ to bi-variate functors of locally free sheaves on any variety (or scheme) $Z$ over $\bb{C}$. Since exactness is a local property, it follows that 
\begin{equation}\label{eq:relative-presentation}
  \ol{\mf{T}}(\mc{V}_1,\mc{V}_2) \oo_{\mc{O}_Z} \ol{\mf{R}}(\mc{V}_1,\mc{V}_2) \lra \ol{\mf{R}}(\mc{V}_1,\mc{V}_2) \lra \ol{\mf{A}}(\mc{V}_1,\mc{V}_2) \lra 0
\end{equation}
is exact for any pair of locally-free sheaves $(\mc{V}_1,\mc{V}_2)$ with $(\rk\mc{V}_1,\rk\mc{V}_2)<(m,n)$.

Consider now vector spaces $V_1,V_2$ with $(\dim(V_1),\dim(V_2))=(m,n)$, and consider $Z = \PP$ the projective space of $(m-1)$-dimensional quotients of $V_1$ (as in Section~\ref{sec:subspace}). We take $\mc{V}_1 = \mc{Q}$ the tautological rank $(m-1)$ quotient sheaf on $\PP$, and $\mc{V}_2 = V_2 \oo \mc{O}_{\PP}$. Since $(\rk\mc{V}_1,\rk\mc{V}_2)=(m-1,n)\prec(m,n)$, (\ref{eq:relative-presentation}) is exact. We let
\[ B = H^0(\PP,\ol{\mf{R}}(\mc{V}_1,\mc{V}_2))\]
and observe that (using the notation from Section~\ref{sec:subspace}) $B=\ol{R}/I(Y)$ is the coordinate ring of the subspace variety $Y\subset\ol{W}^{\vee}$. It follows from Theorem~\ref{thm:subspace} that $B$ has an $\ol{R}$-module presentation
\begin{equation}\label{eq:olR-presentation-B}
 \mf{U}(V_1,V_2) \oo \ol{R} \to \ol{R} \onto B,\mbox{ where }\mf{U}(V_1,V_2) = \bw^m V_1 \oo \bw^m(V_1\oo\Sym^2 V_2),
\end{equation}
so $B$ is defined by degree $m$ equations in $\ol{R}$. We let
\begin{equation}\label{eq:charC}
C =  H^0(\PP,\ol{\mf{A}}(\mc{V}_1,\mc{V}_2)),
\end{equation}
and note that since the maps in (\ref{eq:relative-presentation}) are split as maps of $\mc{O}_{Z}$-modules, it follows that after taking global sections we obtain an exact sequence
\begin{equation}\label{eq:res-TR-B-C}
 H^0\left(\PP,\ol{\mf{T}}(\mc{V}_1,\mc{V}_2) \oo_{\mc{O}_Z} \ol{\mf{R}}(\mc{V}_1,\mc{V}_2)\right) \lra B \lra C\lra 0.
\end{equation}
We claim moreover that the natural multiplication map
\begin{equation}\label{eq:T-oo-R-onto}
 H^0\left(\PP,\ol{\mf{T}}(\mc{V}_1,\mc{V}_2)\right) \oo H^0\left(\PP,\ol{\mf{R}}(\mc{V}_1,\mc{V}_2)\right) \lra H^0\left(\PP,\ol{\mf{T}}(\mc{V}_1,\mc{V}_2) \oo_{\mc{O}_Z} \ol{\mf{R}}(\mc{V}_1,\mc{V}_2)\right))
\end{equation}
is also surjective. To see this, it suffices by \cite[Example~1.8.13]{lazarsfeld} to check that $\ol{\mf{T}}(\mc{V}_1,\mc{V}_2)$ and $\ol{\mf{R}}(\mc{V}_1,\mc{V}_2)$ are (direct sums of coherent locally free) $0$-regular sheaves. Since $\mc{V}_1$ is resolved by the complex 
\[0\lra\mc{O}_{\PP}(-1)\lra V_1 \oo \mc{O}_{\PP}[\lra \mc{V}_1\lra 0],\]
it follows from \cite[Proposition~1.8.8]{lazarsfeld} that $\mc{V}_1$ is $0$-regular, and by \cite[Proposition~1.8.9]{lazarsfeld} the same is true about any polynomial functor applied to $\mc{V}_1$. The same reasoning applies to $\mc{V}_2$, which is $0$-regular since it is trivial. Since tensor products of $0$-regular locally free sheaves are $0$-regular, the desired claim follows.

Combining (\ref{eq:res-TR-B-C}) with the surjective map (\ref{eq:T-oo-R-onto}), and noting that by Bott's theorem we have 
\[H^0\left(\PP,\ol{\mf{T}}(\mc{V}_1,\mc{V}_2)\right)=\ol{\mf{T}}(V_1,V_2),\]
we obtain an exact sequence
\[\ol{\mf{T}}(V_1,V_2) \oo B \lra B \lra C \lra 0.\]
Using the presentation (\ref{eq:olR-presentation-B}) of $B$ as an $\ol{R}$-module, we obtain a presentation of $C$ given by
\begin{equation}\label{eq:presentation-C}
 \left(\ol{\mf{T}}(V_1,V_2) \oplus \mf{U}(V_1,V_2)\right) \oo \ol{R} \lra \ol{R} \onto C,
\end{equation}
which shows that 
\begin{equation}\label{eq:ub-Tor1-C}
 \Tor_1^{\ol{R}}(C,\bb{C}) \subseteq \ol{\mf{T}}(V_1,V_2) \oplus \mf{U}(V_1,V_2).
\end{equation}

If $m>n$ then we have that $C = \ol{A}$, so (\ref{eq:ub-Tor1-C}) yields
\[ \Tor_1^{\ol{R}}(\ol{A},\bb{C})_j = 0\mbox{ for }j\neq 2,3,m.\]
Using the fact that $m\geq n>\lfloor n/2\rfloor + 2$ and Corollary~\ref{cor:Tor1-low-degree} we conclude that $\Tor_1^{\ol{R}}(\ol{A},\bb{C})_m=0$, and therefore $\Tor_1^{\ol{R}}(\ol{A},\bb{C})_j=0$ for $j>4$.

Suppose now that $m=n$. In this case $C$ is only a quotient of $\ol{A}$, and we have a short exact sequence
\[ 0 \lra J \lra \ol{A} \lra C \lra 0,\]
where $J$ is an ideal whose $\GL$-structure is given (applying Bott's theorem to (\ref{eq:charC})) by
\[ J = \bigoplus_{\substack{\ll\in\ol{\mc{M}}_0 \\ \ll_n\neq 0}}\bb{S}_{\ll}V_1 \oo \bb{S}_{\ll}V_2,\]
where $\ol{\mc{M}}_0 = \{\ll' : \ll\in\mc{M}_0\}$. Since partitions with $n$ parts only occur in degree $\geq n$ in $\ol{R}$, it follows that $J$ is generated in degree $\geq n$, so its defining relations have degree $\geq n+1$ and thus $\Tor_1^{\ol{R}}(J,\bb{C})_j=0$ for $j\leq n$. We consider the long exact sequence on $\Tor$ in degree $j$:
\[ \cdots \lra \Tor_1^{\ol{R}}(J,\bb{C})_j \lra \Tor_1^{\ol{R}}(\ol{A},\bb{C})_j \lra \Tor_1^{\ol{R}}(C,\bb{C})_j \lra \cdots\]
By (\ref{eq:ub-Tor1-C}) we know that if $j>4$ then $\Tor_1^{\ol{R}}(C,\bb{C})_j$ may only be non-zero for $j=n>\lfloor n/2\rfloor + 2$, so it doesn't contribute to $\Tor_1^{\ol{R}}(\ol{A},\bb{C})_j$ by Corollary~\ref{cor:Tor1-low-degree}. A similar argument applies to $\Tor_1^{\ol{R}}(J,\bb{C})_j$, which may be non-zero only for $j\geq n+1$. This proves that $\Tor_1^{\ol{R}}(\ol{A},\bb{C})_j=0$ for $j>4$, concluding our proof.

\section{Determinantal ideals of fiber type}\label{sec:Rees}

In this final section we turn our attention to the   graph of the map $\Lambda_2$ introduced in \eqref{eq:phi-wedgephi},
 which is also the blowup of 
$\bb{P}(\Hom(\bb{C}^m,\bb{C}^n))$ along the determinantal variety defined by $I_2$.
Observe that
\[
\Graph\left(\Lambda_2\right) \subseteq 
\bb{P}(\Hom(\bb{C}^m,\bb{C}^n)) \times \bb{P}\left(\Hom\left(\bw^2\bb{C}^m,\bw^2\bb{C}^n\right)\right)
\]
thus $\Graph\left(\Lambda_2\right)$ is defined by bi-homogeneous polynomials in 
the ring $S \otimes R$. 
In fact, the bi-homogeneous coordinate ring of  $\Graph\left(\Lambda_2\right)$ is  the \defi{Rees algebra} of the ideal of minors $I_2 \subseteq S$
\[
\Rees(I_2) = \bigoplus_{d\geq 0} I_2^d \cong S[It] \subseteq S [t]
\] 
where $t$ denotes an indeterminate.
The ring  $\Rees(I_2)$ is the image of the $S$-algebra map $\Pi: S\oo R \rightarrow S[t]$
determined by  $R_1 = W \rightarrow W t \subseteq S_2 t$.
In this section we scale the grading of $R$ and consider it as polynomial ring generated in degree $2$.
In particular, $\Rees(I_2)$
 is a bi-graded $\bb{C}$-algebra  generated  by
$\Rees(I_2)_{(1,0)} = V_1\oo V_2 = S_1$ and $\Rees(I_2)_{(0,2)} = W t\subseteq S_2 t$.

It is natural to study the defining relations of $\Rees(I_2)$.
From this perspective, the algebra $A$, investigated in the previous sections,  is the \defi{special fiber} $\Rees(I_2) \oo_S \bb{C}$ of the Rees algebra.
Denoting by $\mc{J} = \ker(\Pi)$ and $I(X_{m,n}) = \ker (\Psi)$ the defining ideals of $\Rees(I_2)$ and $A$ respectively,
we have that $\mc{J}_{(0,2d)} = I(X_{m,n})_d $ for all $d\in \bb{N}$.
On the other hand, denoting by $\op{Syz}(I_2)$ the first syzygy module of $I_2$,
we have that $\mc{J}_{(d,2)} = \op{Syz}(I_2)_{d+2}$ for all $d\in \bb{N}$.
The ideal $I_2$ is said to be of \defi{fiber type} if $\mc{J}$ is generated by 
 $\mc{J}_{(0,2d)} $ and $\mc{J}_{(d,2)}$ for $d \in \bb{N}$.
In analogy with the case of maximal minors  (see \cite{BCV2}) we ask:

\begin{question}\label{que:fibert-type}
Let $I_2$ be the ideal   of $2 \times 2 $ minors of a generic $m\times n$ matrix $X$.
Is $I_2$ of fiber type?
\end{question}

The fiber type property would reduce the problem of finding the relations of $\Rees(I_2)$ to the one of finding the first syzygies of $I_2$, solved by Lascoux, and the one of finding the relations of $A$, settled in Theorem \ref{thm:relations-A}.
Below we observe,
adapting an argument of \cite{BCV},
 that Question \ref{que:fibert-type} reduces to matrices of size $(n+2) \times n$, and that the  answer is affirmative for $m \times 3$ matrices.
 Recall our convention that $m = \dim V_1 \geq n = \dim V_2$.

\begin{proposition}\label{prop:fiber-type}
Fix $n\in \bb{N}$.
If $I_2$ is of fiber type for a generic $(n+2) \times n$ matrix, 
then it is of fiber type for any generic $m \times n$ matrix.
\end{proposition}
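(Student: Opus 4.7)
The plan is to use the polynomial functor framework of Section~\ref{subsec:bi-functors} to transfer fiber type from the $(n+2)\times n$ case to arbitrary $m\times n$, in the style of \cite{BCV}.

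First, I would promote $\Rees(I_2)$, the ambient ring $S\oo R$, and the defining ideal $\mc{J}=\ker(\Pi)$ to bivariate polynomial functors $\mathfrak{R}ees(\mathfrak{I}_2)$, $\mathfrak{S}\oo\mathfrak{R}$, $\mathfrak{J}$. Since the minimal generators $\Tor_1^{\mathfrak{S}\oo\mathfrak{R}}(\mathfrak{R}ees(\mathfrak{I}_2),\bb{C})$ form a polynomial subfunctor, their Schur decomposition $\bigoplus c_{\ll,\mu}^{(e,2f)}\,\bb{S}_{\ll}\boxtimes\bb{S}_{\mu}$ has multiplicities $c_{\ll,\mu}^{(e,2f)}$ depending only on $(e,f,\ll,\mu)$ and not on $\dim V_1$ or $\dim V_2$; evaluation at $(\bb{C}^m,\bb{C}^n)$ simply kills Schur components with $\ll_1'>m$ or $\mu_1'>n$. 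Combined with Lascoux's resolution, which shows that $\op{Syz}(I_2)$ is generated in degree $3$ (so that $\mc{J}_{(d,2)}=S_{d-1}\cdot\mc{J}_{(1,2)}$ for every $d\geq 1$), fiber type for $(m,n)$ becomes the assertion
\[
c_{\ll,\mu}^{(e,2f)}=0\quad\text{for all }\ll,\mu\text{ with }\ll_1'\leq m,\ \mu_1'\leq n,\text{ and }(e,2f)\notin\{(0,2d):d\geq 1\}\cup\{(1,2)\}.
\]
The fiber-type hypothesis at $(n+2)\times n$ yields exactly this vanishing in the range $\ll_1'\leq n+2$.

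The remaining content is the Key Claim: $c_{\ll,\mu}^{(e,2f)}=0$ also when $\ll_1'>n+2$ and $\mu_1'\leq n$, for every forbidden bi-degree. This is where I expect the principal obstacle. My approach would combine the natural multiplication maps
\[
(V_1\oo V_2)\oo\mc{J}_{(e-1,2f)}\lra\mc{J}_{(e,2f)}\quad\text{and}\quad W\oo\mc{J}_{(e,2f-2)}\lra\mc{J}_{(e,2f)}
\]
with a Pieri/Littlewood--Richardson analysis of the Cauchy-type decomposition
\[
S_e\oo R_f\ =\ \bigoplus_{\substack{\alpha\vdash e\\ \nu\vdash f}}\bb{S}_{\alpha}V_1\oo\bb{S}_{\alpha}V_2\oo\bb{S}_{\nu}(\bw^2 V_1)\oo\bb{S}_{\nu}(\bw^2 V_2).
\]
The $V_2$-Schur factors force $\alpha_1'\leq n$ and bound $\nu$ through the plethysm $\bb{S}_{\nu}(\bw^2 V_2)$, so any $\ll$ with $\ll_1'>n+2$ appearing in $\mc{J}_{(e,2f)}$ must absorb all of its ``tall'' rows (those indexed above $n$) from the $\bb{S}_{\nu}(\bw^2 V_1)$ factor. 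This structural feature can then be used to exhibit the corresponding Schur component as lying in the image of $W$-multiplication from the lower bi-degree $(e,2f-2)$, which would show it is non-minimal. Together with the previous step this forces $c_{\ll,\mu}^{(e,2f)}=0$ for all $\ll,\mu$ with $\ll_1'\leq m$ and $\mu_1'\leq n$, yielding fiber type for $(m,n)$.
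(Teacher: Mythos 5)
Your framework is right: you correctly identify that the proof must run through polynomial functoriality and that the whole problem is to show that the Schur components $\bb{S}_\ll\boxtimes\bb{S}_\mu$ occurring in the minimal generators of $\mc{J}$ satisfy $\ll_1'\leq n+2$ (given $\mu_1'\leq n$). But the mechanism you propose for this ``Key Claim'' does not go through, and it is in fact at this exact step that the paper does something quite different. Your plan is to analyze the Cauchy decomposition of $T_{(e,2f)}=S_e\oo R_f$, argue that any long $\ll$ must absorb its tall rows from the plethysm $\bb{S}_\nu(\bw^2 V_1)$, and then conclude that the component lies in the image of $W$-multiplication from bi-degree $(e,2f-2)$ and hence is non-minimal. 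Two things fail. First, the Littlewood--Richardson/plethysm analysis of $S_e\oo R_f$ gives no bound close to $n+2$: partitions occurring in $\bb{S}_\nu(\bw^2 V_1)$ with $\ell(\nu)\leq\binom{n}{2}$ can have $O(n^2)$ rows, so ``the tall rows come from the $W^{\oo f}$ factor'' places essentially no constraint on $\ell(\ll)$. Second, and more seriously, the last inference is circular: lying in the $T_{(0,2)}$-multiplication image of $T_{(e,2f-2)}$ does not show a generator is non-minimal unless its preimage already lies in $\mc{J}_{(e,2f-2)}$, and you have no a priori information about $\mc{J}_{(e,2f-2)}$ at this point of the argument.

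The paper's proof supplies precisely the missing input, and it is a different idea from the Cauchy/LR analysis. Call a summand $\bb{S}_\ll V_1\oo\bb{S}_\mu V_2$ of $T_{(d,e)}$ \emph{balanced} if $\ll=\mu$ and \emph{unbalanced} otherwise. Because $\Rees(I_2)=\bigoplus_d I_2^d$ is a direct sum of ideals of $S$, and $S$ itself contains only balanced representations, every unbalanced summand of $T_{(d,e)}$ is automatically contained in $\mc{J}_{(d,e)}$. Now look at the surjective multiplication $\chi:T_{(1,0)}\oo T_{(d-1,e)}\ \oplus\ T_{(0,2)}\oo T_{(d,e-2)}\twoheadrightarrow T_{(d,e)}$, and split the source as $E\oplus F$ according to whether the right tensor factor is balanced or unbalanced. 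Since the unbalanced part is already in $\mc{J}$, $\chi(F)\subseteq(T_{(1,0)}\oplus T_{(0,2)})\mc{J}$, so any irreducible $H$ that is a \emph{minimal} generator of $\mc{J}$ is disjoint from $\chi(F)$, and by surjectivity of $\chi$ must be isomorphic to an irreducible occurring in $E$. But $E$ is $(V_1\oo V_2)\oo\bb{S}_\ll V_1\oo\bb{S}_\ll V_2$ or $(\bw^2V_1\oo\bw^2V_2)\oo\bb{S}_\ll V_1\oo\bb{S}_\ll V_2$ with $\ll\in\P_n$, and a single application of Pieri then gives $\ell(\ll)\leq n+2$ (one or two extra rows on top of at most $n$). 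That one-step multiplication structure is what produces the sharp $n+2$, whereas decomposing all of $S_e\oo R_f$ at once loses control of the row count entirely. Your Lascoux observation about $\op{Syz}(I_2)$ being generated in degree $3$ is correct but not used, and is not needed for the reduction.
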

\begin{proof}
Denote $T = S\oo R$.
For each bi-degree $(d, e) \ne (0,0)$ we consider the surjective map
 \[
\chi: D =  T_{(1,0)} \otimes T_{(d-1,e)} \oplus  T_{(0,2)} \otimes T_{(d,e-2)} \rightarrow T_{(d,e)}
 \]
 defined by $\chi(x\otimes \alpha, y\otimes \beta ) = x\alpha + y\beta$.
We say that a representation $\bb{S}_\ll V_1 \oo \bb{S}_\mu V_2 $ is balanced if $\ll = \mu$, unbalanced otherwise. 
Decompose $T_{(d,e)} =B_{(d,e)} \oplus U_{(d,e)}$,
 where
$ B_{(d,e)}$ (resp. $U_{(d,e)}$) is the sum of all the balanced (resp. unbalanced) sub-representations.
Furthermore, let $D = E\oplus F$ where 
\[
 E =T_{(1,0)} \otimes B_{(d-1,e)} \oplus  T_{(0,2)} \otimes B_{(d,e-2)},
 \qquad 
 F = T_{(1,0)} \otimes U_{(d-1,e)} \oplus  T_{(0,2)} \otimes U_{(d,e-2)}.
 \]
Notice that, since $\Rees(I_2)$ is a direct sum of ideals of $S$, 
it contains no unbalanced representation, and therefore $U_{(d,e)} \subseteq \mc{J}_{(d,e)}$.

Let $H\subseteq \mc{J}_{(d,e)}$ be an irreducible representation that 
is part of the minimal generating set of $\mc{J}$, i.e. such that its image   modulo $(T_{(1,0)}\oplus T_{(0,2)})\mc{J}$ is non-zero. 
Setting  $H'=\chi(F)$,
   we cannot have $H\subseteq H'$, since $H$ is  minimal.
Thus  $H\cap H' = 0$, and by surjectivity of $\chi$
there exists an irreducible sub-representation $H'' \subseteq D$, disjiont from $F$, mapping non-trivially to $H$ via $\chi$.
It follows that $H''$, and hence also $H$, is isomorphic to an irreducible representation occurring in $E$,
and hence occurring in either 
$ V_1 \oo V_2  \oo \bb{S}_\ll V_1 \oo \bb{S}_\ll V_2$ or 
$ \bigwedge^2V_1 \oo \bigwedge^2V_2  \oo \bb{S}_\ll V_1 \oo \bb{S}_\ll V_2$,
for some $\ll \in \P$.
We  have $\lambda_{n+1}= 0$ since $H \ne 0$, and  by Pieri's rule 
$H \cong \bb{S}_{\mu}V_1 \otimes \bb{S}_{\nu} V_2$ 
for some $\mu, \nu \in \P$ with $\mu_{n+3}= \nu_{n+3}=0$.
We conclude that the relations $H \subseteq \mc{J}_{(d,e)}$ already appear in the case  $m = n+2$, and the desired statement follows.
\end{proof}

It can be verified using Macaulay2 \cite{GS} that for a generic $5\times 3$ matrix, the ideal $I_2$ is of fiber type. Applying Proposition~\ref{prop:fiber-type} with $n=3$ we obtain the following. 

\begin{corollary}
 For a generic $m\times 3$ matrix, the ideal $I_2$ is of fiber type.
\end{corollary}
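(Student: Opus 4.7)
The plan is to invoke Proposition~\ref{prop:fiber-type} with $n=3$, which reduces the fiber type property for generic $m\times 3$ matrices to the single case of a generic $5\times 3$ matrix. Concretely, the proposition asserts that if $I_2$ is of fiber type for an $(n+2)\times n$ generic matrix, then it is of fiber type for every $m\times n$ generic matrix with $m\geq n$; thus for $n=3$ it suffices to treat $m=5$.

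For the base case, I would set up the ring $S=\bb{C}[x_{i,j}]$ with $1\leq i\leq 5$, $1\leq j\leq 3$, together with its polynomial ring $R=\Sym(W)$ in the $\binom{5}{2}\binom{3}{2}=30$ variables indexing the $2\times 2$ minors. The Rees algebra $\Rees(I_2)$ is then the image of the bi-graded presentation $\Pi: S\oo R\to S[t]$. To verify that the defining ideal $\mc{J}=\ker(\Pi)$ is generated by its components of bi-degrees $(0,2d)$ and $(d,2)$, I would compute a minimal bi-graded generating set of $\mc{J}$ in Macaulay2 \cite{GS} by forming the kernel of $\Pi$ directly and then checking the bi-degrees of the minimal generators. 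If each minimal generator has bi-degree of the form $(0,*)$ or $(*,2)$, the fiber type property holds for the $5\times 3$ case, and Proposition~\ref{prop:fiber-type} extends the conclusion to all $m\geq 3$.

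The main obstacle is purely computational: the size of the input (an $R$ with $30$ variables and an $S$ with $15$ variables, mapping into $S[t]$) can make the Gr\"obner basis computation of $\mc{J}$ expensive. A more efficient route would be to exploit the $\GL$-equivariance: one can enumerate the isotypic components of $\mc{J}_{(d,e)}$ for small $(d,e)$ via Pieri's rule applied to the known generators, and verify in each component that no new minimal generator appears outside bi-degrees $(0,2d)$ and $(d,2)$. In practice, combining the known descriptions of $\op{Syz}(I_2)$ (from Lascoux's resolution recalled in Section~\ref{sec:koszul}) and of $I(X_{m,n})$ (from Theorem~\ref{thm:relations-A}), the verification reduces to a bounded computation in low bi-degree, since Proposition~\ref{prop:fiber-type} already restricts the possible new relations in $\mc{J}$ to representations indexed by partitions with at most $n+2=5$ parts.

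Once the Macaulay2 check succeeds for $m=5$, $n=3$, the corollary follows immediately by Proposition~\ref{prop:fiber-type}, with no further argument required.
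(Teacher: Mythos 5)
Your proposal matches the paper's proof exactly: invoke Proposition~\ref{prop:fiber-type} with $n=3$ to reduce to the $(n+2)\times n = 5\times 3$ case, then verify that base case by a direct Macaulay2 computation of the minimal bi-graded generators of $\mc{J}=\ker(\Pi)$. The additional remarks on exploiting $\GL$-equivariance to speed up the computation are sensible but not part of the paper's stated argument.
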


\section*{Acknowledgements}
 Experiments with the computer algebra software Macaulay2 have provided numerous valuable insights. Perlman acknowledges the support of the National Science Foundation Graduate Research Fellowship under grant DGE-1313583. Polini acknowledges the support of the National Science Foundation grant DMS-1601865. Raicu acknowledges the support of the Alfred P. Sloan Foundation, and of the National Science Foundation Grant DMS-1901886.

	\begin{bibdiv}
		\begin{biblist}

\bib{BCV}{article}{
   author={Bruns, Winfried},
   author={Conca, Aldo},
   author={Varbaro, Matteo},
   title={Relations between the minors of a generic matrix},
   journal={Adv. Math.},
   volume={244},
   date={2013},
   pages={171--206},
}

\bib{BCV2}{article}{
   author={Bruns, Winfried},
   author={Conca, Aldo},
   author={Varbaro, Matteo},
   title={Maximal minors and linear powers},
   journal={J. Reine Angew. Math.},
   volume={702},
   date={2015},
   pages={41--53},
}

\bib{DCEP}{article}{
   author={de Concini, Corrado},
   author={Eisenbud, David},
   author={Procesi, Claudio},
   title={Young diagrams and determinantal varieties},
   journal={Invent. Math.},
   volume={56},
   date={1980},
   number={2},
   pages={129--165},
}

\bib{eisenbud-CA}{book}{
   author={Eisenbud, David},
   title={Commutative algebra},
   series={Graduate Texts in Mathematics},
   volume={150},
   note={With a view toward algebraic geometry},
   publisher={Springer-Verlag, New York},
   date={1995},
   pages={xvi+785},
}

\bib{GS}{article}{
          author = {Grayson, Daniel R.},
          author = {Stillman, Michael E.},
          title = {Macaulay 2, a software system for research
                   in algebraic geometry},
          journal = {Available at \url{http://www.math.uiuc.edu/Macaulay2/}}
        }
        
\bib{hartshorne}{book}{
   author={Hartshorne, Robin},
   title={Algebraic geometry},
   note={Graduate Texts in Mathematics, No. 52},
   publisher={Springer-Verlag, New York-Heidelberg},
   date={1977},
   pages={xvi+496},
}

\bib{lazarsfeld}{book}{
   author={Lazarsfeld, Robert},
   title={Positivity in algebraic geometry. I},
   series={Ergebnisse der Mathematik und ihrer Grenzgebiete. 3. Folge. A
   Series of Modern Surveys in Mathematics [Results in Mathematics and
   Related Areas. 3rd Series. A Series of Modern Surveys in Mathematics]},
   volume={48},
   note={Classical setting: line bundles and linear series},
   publisher={Springer-Verlag, Berlin},
   date={2004},
   pages={xviii+387},
}

\bib{sam-snowden}{article}{
   author={Sam, Steven V},
   author={Snowden, Andrew},
   title={Introduction to twisted commutative algebras},
   journal = {arXiv},
   number = {1209.5122},
   date={2012}
}

\bib{sam-snowden-category}{article}{
   author={Sam, Steven V},
   author={Snowden, Andrew},
   title={GL-equivariant modules over polynomial rings in infinitely many
   variables},
   journal={Trans. Amer. Math. Soc.},
   volume={368},
   date={2016},
   number={2},
   pages={1097--1158},
}

\bib{weyman}{book}{
   author={Weyman, Jerzy},
   title={Cohomology of vector bundles and syzygies},
   series={Cambridge Tracts in Mathematics},
   volume={149},
   publisher={Cambridge University Press, Cambridge},
   date={2003},
   pages={xiv+371},
}

		\end{biblist}
	\end{bibdiv}

\end{document}